\documentclass[final,1p]{elsarticle}

\usepackage{amsmath,amssymb,exscale}
\usepackage{ams}
\usepackage{psfrag}
\usepackage{graphicx}
\usepackage[dvipsname]{xcolor}
\usepackage{enumerate}
\usepackage{tikz}


\newcommand{\corr}[1]{\textcolor{black}{#1}}
\newcommand{\cor}[1]{\textcolor{black}{#1}}
\newcommand{\corg}[1]{\textcolor{black}{#1}}
\newcommand{\corm}[1]{\textcolor{black}{#1}}

\newcommand{\corred}[1]{\textcolor{black}{#1}}


\newcommand{\mcI}{{\mathcal I}}

\newcommand{\mcP}{{\mathcal P}}

\newcommand{\mcS}{\mathcal{S}}

\newcommand{\Real}{\mathop{\text{\rm Re}}}
\newcommand{\Imag}{\mathop{\text{\rm Im}}}


\newcommand{\jj}{\mathbf{j}}
\newcommand{\pp}{\mathbf{p}}

\newcommand{\bq}{{\bf q}}
\newcommand{\bw}{{\bf w}}
\newcommand{\by}{{\bf y}}
\newcommand{\bz}{{\bf z}}

\newcommand{\ii}{\mathbf{i}}

\newcommand{\mm}{\mathbf{m}}

\newcommand{\oone}{\boldsymbol{1}}

\newcommand{\E}{{\mathbb E}}

\newcommand{\rset}{{\mathbb R}}

\newcommand{\Nset}{{\mathbb N}}

\newcommand{\Pol}{\mathbb{P}}
\newcommand{\eset}[1]{{\mathbb E} \left[ #1 \right] }


\newcommand{\Grad}{\nabla}

\newcommand{\var}{\text{\rm var}}

\newcommand{\lv}{w}


\newtheorem{assumption}{Assumption}

\newtheorem{remark}{Remark}
\newtheorem{problem}{Problem}
\newtheorem{prop}{Proposition}

\newtheorem{thm}{Theorem}
\newtheorem{lem}[thm]{Lemma}
\newtheorem{corl}[thm]{Corollary}
\newdefinition{rmk}{Remark}
\newproof{pf}{Proof}
\newproof{pot}{Proof of Theorem \ref{thm2}}

\DeclareMathOperator*{\esssup}{ess\,sup}
\DeclareMathOperator*{\essinf}{ess\,inf}
\biboptions{sort&compress}

\journal{Computers and Mathematics with applications}

\begin{document}

\begin{frontmatter}

\title{\corr{Analytic regularity and collocation approximation for
    elliptic PDEs with random domain deformations}}

\author[jc]{Julio E. Castrill\'{o}n-Cand\'{a}s\corref{cor1}}
\ead{uvel@alum.mit.edu}

\author[fn]{Fabio Nobile}
\ead{fabio.nobile@epfl.ch}

\author[rt,jc]{Ra\'{u}l F. Tempone}
\ead{raul.tempone@kaust.edu.sa}

\address[jc]{SRI Center for Uncertainty Quantification, 4700 King Abdullah
University of Science and Technology, Thuwal 23955-6900, Saudi Arabia.}

\address[fn]{CSQI-MATHICSE, Ecole Politechnique Federale Lausanne,
Station 8, CH1015, Lausanne, Switzerland}

\address[rt]{Applied Mathematics and Computational Science, 4700,
  King Abdullah University of Science and Technology, Thuwal,
  23955-6900, Saudi Arabia}

\cortext[cor1]{corresponding author}

\begin{abstract}
In this work we consider the problem of approximating the statistics
of a given Quantity of Interest (QoI) that depends on the solution of
a linear elliptic PDE defined over a random domain parameterized by
$N$ random variables. The elliptic problem is remapped on to a
corresponding PDE with a fixed deterministic domain. We show that the
solution can be analytically extended to a well defined region in
$\C^{N}$ with respect to the random variables. A sparse grid
stochastic collocation method is then used to compute the mean and
standard deviation of the QoI. Finally, convergence rates for the mean
and variance of the QoI are derived and compared to those obtained in
numerical experiments.
\end{abstract}

\begin{keyword} Uncertainty Quantification,
Stochastic Collocation, Stochastic PDEs, Finite Elements, Complex Analysis,
Smolyak Sparse Grids
\end{keyword}

\end{frontmatter}



\section{Introduction}
In many physical processes the practicing engineer or scientist
encounters the problem of optimal design under uncertainty of the
underlying domain. For example, in graphine sheet nano fabrication the
exact geometries of the designed patterns (e.g. nano pores) are not
easy to control due to uncertainties.  If there is no quantitative
understanding in the involved domain uncertainty such a design may be
carried out by trial and error. However, in order to accelerate the
design cycle, it is essential to quantify the influence of this
uncertainty on Quantities of Interest, \corr{for example,} the sheet
stress of the graphene sheet. Other examples include lithographic
process introduced in semi-conductor design \cite{Zhenhai2005}.

Collocation and perturbation approaches have been suggested in the
past as an approach to quantify the statistics of the QoI with random
domains \cite{Zhenhai2005, Chauviere2006, Fransos2008, 
  Tartakovsky2006, Harbrecht2008}.  The collocation approaches
  proposed in \cite{Chauviere2006, Fransos2008, Tartakovsky2006} work
  well for large amplitude domain perturbations although suffer from
  the curse of dimensionality. Moreover, these works lack error
estimates of the QoI with respect to the number of sparse grid points.
On the other hand, the perturbations approaches introduced in
\cite{Harbrecht2008, Zhenhai2005} are efficient for small domains
\corr{perturbation}.

In this paper we give a rigorous convergence analysis of the
collocation approach based on isotropic Smolyak grids. This consists
of an analysis of the regularity of the solution with respect to the
parameters describing the domain perturbation. In this respect we show
that the solution can be analytically extended to a well defined
region in $\C^{N}$ with respect to the random variables. Moreover, we
derive error estimates both in the ``energy norm'' as well as on
functionals of the solution (Quantity of Interest) for Clenshaw Curtis
abscissas that can be easily generalized to a larger class of sparse
grids.

The outline of the paper is the following: In Section \ref{setup} we
set up the mathematical problem and reformulate the random domain
elliptic PDE problem onto a deterministic domain with random matrix
coefficients. We assume that the random boundary is parameterized by
$N$ random variables.  In Section \ref{analyticity} we show that the
solution can be analytically extended into a well defined region in
$\C^{N}$. Theorem \ref{analyticity:theorem1} is the main result of
this paper.  In Section \ref{stochasticcollocation} we setup the
stochastic collocation problem and summarize several known
\corr{sparse grid} approaches that are used to approximate the mean
and variance of the QoI.  In Section \ref{erroranalysis} we assume
that the random domain is truncated to $N_s \leq N$ random
variables. We derive error estimates for the mean and variance of the
QoI with respect to the finite element, sparse grid and truncation
approximations. Finally, in section \ref{numericalresults} numerical
examples are presented.

\section{Setup and problem formulation}
\label{setup}

Let $\Omega$ be the set of outcomes from the complete probability
space $(\Omega, {\cal F}, \mathbb{P})$, where ${\cal F}$ is a sigma
algebra of events and $\mathbb{P}$ is a probability measure.  Define
$L^{q}_{P}(\Omega)$, $q \in [1, \infty]$, as the space of random
variables such that
\[
L^{q}_{P}(\Omega):= \corm{\Bigg\{ v \,\,|\,\, \int_{\Omega}
|v(\omega)|^{q}\,d\mathbb{P} < \infty \Bigg\} } \,\,\mbox{and}\,\,\,
L^{\infty}_{P}(\Omega) := \{v \,\,|\,\, \esssup_{ \omega \in \Omega}
|v(\omega)| < \infty \},
\]
\noindent where $v:\Omega \rightarrow \mathbb{R}$ be a measurable
random variable.

Suppose $D(\omega) \subset \mathbb{R}^{d}$ is an open bounded domain
with Lipschitz boundary $\partial D(\omega)$ parameterized with
respect to a stochastic parameter $\omega \in \Omega$. The strong form
of the problem we consider in this work is: \cor{given sufficiently
  smooth regularity on $f(\cdot,\omega), a(\cdot,\omega): D(\omega)
  \rightarrow \R^{d}$},
find $\corr{u(\cdot,\omega)}: D(\omega) \rightarrow \R$ such that
almost surely
\[
\begin{split}
  -\nabla \cdot ( a(x, \omega) \nabla u(x,\omega) ) &= f(x,\omega),\,\,\,
\corm{x \in D(\omega),} \\
u &=0\hspace{11mm}\mbox{on \corm{$\partial D(\omega)$.}}
\end{split}
\] 
Now, assume the diffusion coefficient satisfies the following
assumption.
\begin{assumption} There exist constants $a_{min}$ and $a_{max}$ such
  that
\[
0 < a_{min} \leq a(x, \omega) \leq a_{max} < \infty
\,\,\,\mbox{for a.e. $x
    \in D(\omega)$, $\omega \in \Omega$},
\]
\noindent where
\[
a_{min} := \essinf_{x \in D(\omega), \omega \in \Omega } a(x, \omega)
\,\,\,\,\mbox{and}\,\,\,\, a_{max} := \esssup_{x \in D(\omega),
  \omega \in \Omega} a(x, \omega).
\]
\label{setup:Assumption1}
\end{assumption}
We now state the weak formulation as:
\begin{problem}
Find $u(\cdot,\omega) \in H^{1}_{0}(D(\omega))$ s.t.
\begin{equation}
\int_{D(\omega)} a(x, \omega) 
\nabla u(x, \omega)  \cdot \nabla v(x) \,\, dx =
\int_{D(\omega)} f(x, \omega) v(x) \,\, dx \,\,\,\,\, \forall v \in
H^{1}_{0}(D(\omega))\,\,\,\mbox{$a.s.$ in $\Omega$},
\end{equation}
\noindent where $f(\cdot,\omega) \in L^{2}(D(\omega))$ for
a.e. $\omega \in \Omega$.
\label{setup:Prob1}
\end{problem}
Under Assumption \ref{setup:Assumption1} the weak formulation has a
unique solution up to a zero-measure set in $\Omega$.

\subsection{Reformulation onto a fixed Domain}

Now, assume that given any $\omega \in \Omega$ the domain $D(\omega)$
can be mapped to a \corr{open and bounded} reference domain $U \subset
\mathbb{R}^{d}$ with Lipschitz boundary through a random map
\corr{$F(\omega):U \rightarrow D(\omega)$}, where we assume that
\corr{$F(\omega)$} is one-to-one and the determinant of the
\corr{Jacobian $|\partial F(\cdot, \omega)| \in L^{\infty}(U)$ almost
  surely.}  Furthermore, we assume that $|\partial F|$ is uniformly
greater than zero almost surely.  We will, however, make the following
equivalent assumption.

\begin{assumption} \corr{Suppose that the  map $F(\omega):U \rightarrow
  D(\omega)$ is one-to-one a.s. and that there exist constants
  $\F_{min}$ and $\F_{max}$ such that}
\[
0<\F_{min} \leq \sigma_{min} (\partial F(\omega)) 
\,\,\mbox{and}\,\, \sigma_{max} (\partial F(\omega)) \leq
\F_{max} < \infty
\]
\label{setup:Assumption2}
\noindent almost everywhere in $U$ and almost surely in $\Omega$. We
have denoted by $\sigma_{min} (\partial F(\omega))$ (and $\sigma_{max}
(\partial F(\omega))$) the minimum (respectively maximum) singular
value of the Jacobian $\partial F(\omega)$. 
\end{assumption}
\cor{In the rest of the paper we shall drop repeating a.s. in $\Omega$
  and a.e.  in $U$ unless disambiguation is needed.  Moreover,
    for any random function $v(\omega):D(\omega)\rightarrow \R$, we
    denote by $\hat{v} = v \circ F$ the random function
    $\hat{v}(\cdot, \omega) = v(F(\cdot,\omega),\omega):U
    \rightarrow \R$.}

\smallskip

Problem \ref{setup:Prob1} can be reformulated with respect to the
fixed reference domain $U$. From the chain rule we have that for any
$v \in C^{1}(D(\omega))$
\begin{equation}
\nabla v = \partial F^{-T} \nabla (v \circ F).
\label{setup:equivalence}
\end{equation}
\begin{remark}
Note that we refer to $\nabla v:D(\omega) \rightarrow \R^d$ as
  the gradient of $v$ on $D(\omega)$ and $\nabla \hat v:U \rightarrow
  \R^d$, with $\hat v = v \circ F$, as the gradient of \corred{$\hat v$} on
  $U$. Therefore the notation $\nabla (v \circ F):U \rightarrow \R^d$
  refers to the gradient of $\hat v$ on $U$.
\end{remark}

\corr{Now let us recall the chain rule for Sobolev spaces (see Theorem
  3.35 in \cite{Adams1975}): Let $U$, $D \subset \R^n$ and $T: U
  \rightarrow D$ be invertible. Suppose that $T$ and $T^{-1}$ have
  continuous, bounded derivatives of order less or equal to $j$. Then
  if $k \in W^{j,p}(D)$, $p \geq 1$, we have $v = k \circ T \in
  W^{j,p}(U)$ and the derivatives of $v$ are given by the chain rule.}

Thus by the chain rule on Sobolev spaces for any $v \in
H^{1}(D(\omega))$ we have that $\nabla v = \partial F^{-T} \nabla (v
\circ F)$, where $v \circ F \in H^{1}(U)$.  By a change of variables,
the weak form can now be posed as:
\begin{problem}
Find  $\hat u(\cdot,\omega) \in H^{1}_{0}(U)$ s.t.
\begin{equation}
B(\omega ;\hat u(\cdot,\omega),v) = l(\omega; v) ,\,\,\,\forall
\corm{v \in H^{1}_{0}(U),}
\end{equation}
\noindent where for any $v,s \in H^1_{0}(U)$
\[
\begin{split}
B(\omega; s,v)
&:= 
\int_{U} \corg{(a \circ F)(\cdot,\omega)(\nabla s)}
^{T} C^{-1}(\cdot,\omega) \nabla v | \partial
F(\cdot,\omega)|, \\
l(\omega; v)
&:=
\int_{U} \cor{(f \circ F)(\cdot,\omega)}  v \,\, | \partial F(\cdot,\omega)  |,
\end{split}
\]
\label{setup:Prob2}
$(f \circ F)(\cdot,\omega) \in L^{2}(U)$ and $C(\cdot,\omega) =
  \partial F^{T} (\cdot,\omega) \partial F(\cdot,\omega) $ almost
  surely.  We now recover $u(\cdot,\omega):D(\omega)\rightarrow
  H^{1}_{0}(\omega)$ as $u = \hat{u} \circ F^{-1}$.
\end{problem}
Note that under this notation $\hat{u}(\cdot,\omega)$ can be
  written as $u(\cdot,\omega) \circ F(\cdot,\omega)$ or shortly as $u
  \circ F$, which is the notation used in the rest of the paper. Note
  that we can also use the notation $(u \circ F)(\cdot,\omega)$.


The following lemmas give the conditions under which Problem
\ref{setup:Prob2} is well posed.

\begin{lem}
\corr{Under Assumptions \ref{setup:Assumption2} we have that
\begin{enumerate}[i)]
\item $L^{2}(D(\omega))$ and $L^{2}(U)$ are isomorphic.
\item $H^{1}(D(\omega))$ and $H^{1}(U)$ are isomorphic.
\end{enumerate}
Moreover, $\forall v \in H^{1}(D(\omega))$ 
\begin{equation}
\corr{\| \nabla v \|_{L^{2}(D(\omega))} 
\leq \F^{d/2}_{max}\F^{-1}_{min}
\| \nabla (v \circ F) \|_{L^{2}(U)}}.
\label{setup:eqn0}
\end{equation}
}
\label{setup:lemma0}
\end{lem}
\begin{pf}
\corr{ $i)$ is immediate.  Now, from \ref{setup:equivalence} and the
  chain rule on Sobolev spaces we obtain that $\forall v \in
  H^{1}_{0}(D(\omega))$ the inequality \eqref{setup:eqn0} is
  satisfied. We can similarly obtain a bound for the converse.  It
  follows that $H^{1}(D(\omega))$ and $H^{1}(U)$ are isomorphic.}
\end{pf}

\begin{lem} Given that Assumptions \ref{setup:Assumption1} and
  \ref{setup:Assumption2} are satisfied then there exists a.s. a
  unique solution to Problem \ref{setup:Prob2}, which coincides with
  the solution to Problem \ref{setup:Prob1}, and 
\[
\| \nabla u \|_{L^{2}(D(\omega))} 
 \leq 
\frac{\F^{3d/2+2}_{max}}{a_{min} \F^{d+1}_{min}} \| f \circ F
  \|_{L^{2}(U)} C_{P}(U)
\]
\noindent where $C_{P}(U)$ is the Poincar\'{e} constant of the
reference domain $U$.
  \label{setup:lemma1}
\end{lem}
\begin{pf} 
  From Assumption \ref{setup:Assumption2} we have that
\[
|\partial F| = \sqrt{ |C|} 
            = \sqrt{\Pi_{i=1}^{d} \lambda(C) } 
            = \Pi_{i=1}^{d} \sigma_{i}( \partial F).
\]
\noindent therefore $\F_{min}^{d} \leq |\partial F| \leq
\F_{max}^{d}$.  Furthermore, from Assumption \ref{setup:Assumption2}
we have that 
\[
\lambda_{min} ((a \circ F) C^{-1} |\partial F| ) 
\geq a_{min}
\F^{d}_{min} \lambda_{min}(C^{-1}) = a_{min} \F^{d}_{min} \F^{-2}_{max} 
> 0,
\]
and
 \[
 \lambda_{max} ( ( a \circ F ) C^{-1} |\partial F| ) \leq a_{max}
 \F^{d}_{max} \lambda_{max}(C^{-1}) = a_{max} \F^{d}_{max}\F^{-2}_{min} 
 < \infty.
 \]
Thus Problem \ref{setup:Prob2} is uniformly continuous and coercive.
\corr{Furthermore, since $f \circ F \in L^{2}(U)$ then from the
  Lax-Milgram theorem there exists a.s. a unique solution}.  The
equivalence between Problems \ref{setup:Prob1} and \ref{setup:Prob2}
is an immediate consequence of the chain rule and the isomorphism
between $H^{1}_{0}(U)$ and $H^{1}_{0}(D(\omega))$ (Lemma
\ref{setup:lemma0}).

\noindent From the Cauchy-Schwartz inequality we obtain
\[
\begin{split}
\lambda_{min} ((a \circ F) C^{-1} |\partial F| ) 
 \| \nabla (u \circ F) \|^{2}_{L^{2}(U)} 
&\leq 
|B(\omega;u \circ F, u \circ F)| = |l(\omega;u \circ F)| \\
&\leq
\int_{U} |f \circ F | |u\circ F| |\partial F|\\
&\leq \|f \circ F\|_{L^{2}(U)}  \|u \circ F\|_{L^{2}(U)}  \F^{d}_{max}.
\end{split}
\]
From the Poincar\'{e} inequality ($\|u \circ F\|_{L^{2}(U)} 
\leq C_{P}(U) \| \nabla (u \circ F)\|_{L^{2}(U)}$) we obtain
\[
\| \nabla (u \circ F) \|_{L^{2}(U)}
 \leq \frac{\| f \circ F
  \|_{L^{2}(U)} C_{P}(U) \F^{d}_{max}  
}{a_{min} \F^{d}_{min} 
\F^{-2}_{max}}.
\]
\corr{Now, from equation \eqref{setup:eqn0} it follows that $\forall v
  \in H^{1}_{0}(D(\omega))$}
\[
\| \nabla u \|_{L^{2}(D(\omega))}
 \leq \frac{\F^{3/2d+2}_{max}}{a_{min} \F^{d+1}_{min}} \| f \circ F
  \|_{L^{2}(U)} C_{P}(U).
\]
\end{pf}

\begin{remark}
\label{setup:Inhomogenous}

For many practical applications the non-zero Dirichlet boundary value
problem is more interesting. We can easily extend the stochastic
domain problem to non-zero Dirichlet boundary conditions.

Suppose we have the following boundary value problem: Given
  $f(\cdot,\omega), a(\cdot,\omega): D(\omega) \rightarrow \R^{d}$ and
  $g(\cdot,\omega):\partial D(\omega) \rightarrow \R^{d}$ find
  $u(\cdot,\omega): D(\omega) \rightarrow \R^{d}$ such that almost
  surely
\[
\begin{split}
  -\nabla \cdot ( a(x, \omega) \nabla u(x,\omega) ) &= f(x,\omega),\,\,\,
\corm{x \in D(\omega),} \\
u &= g\hspace{11mm}\mbox{on $\partial D(\omega)$.}
\end{split}
\]
Since $U$ is bounded and Lipschitz there exists a bounded linear
operator $T:H^{1/2}(\partial U) \rightarrow H^{1}(U)$ such that
$\forall \tilde g \in H^{1/2}(\partial U)$ we have that $\tilde {\bf
  w} := T \tilde g \in H^{1}(U)$ satisfies $\hat \bw|_{U} = \tilde g$
almost surely. The weak formulation can now be posed as
(\cite{evans1998} chapter 6, p297):
\begin{problem}
Given that $f \circ F \in L^{2}(U)$ find \cor{$\hat{u}(\cdot, \omega)
  \in H^{1}_{0}(U)$} s.t.
\[
  \cor{B(\omega ;\hat{u},v) = 
\tilde{l}(\omega; v) ,\,\,\,\forall v \in H^{1}_{0}(U) }
\]
\corred{almost surely, where $\tilde{l}(\omega; v):=\int_{U} (f \circ
  F)(\cdot,\omega) | \partial F(\cdot,\omega) |v - L(\hat
  \bw(\cdot,\omega),v)$, $\hat g := g \circ F$, $\hat \bw := T(\hat
  g)$,
\[
L( \hat \bw(\cdot,\omega) ,v):=
   \int_U (a \circ F)(\cdot,\omega) (\nabla (\hat \bw
(\cdot,\omega))^{T} C^{-1}(\cdot,\omega) | \partial F(\cdot,
  \omega) | \nabla v, 
\]
and $\hat \bw(\cdot,\omega)
  |_{\partial U} = \hat g(\cdot,\omega)$.  This homogeneous
  boundary value problem can be remapped to $D(\omega)$ as
  $\tilde{u}(\cdot,\omega) := (\hat{u} \circ F^{-1})(\cdot, \omega)$,
  thus we can rewrite $\hat{u}(\cdot,\omega) = (\tilde{u} \circ
  F)(\cdot,\omega)$.}
\label{setup:Prob3}
\end{problem}
The solution $u(\cdot, \omega) \in H^{1}(D(\omega))$ for the non-zero
Dirichlet boundary value problem is obtained as $u(\cdot,\omega) =
\tilde{u} (\cdot,\omega) + \corred{\hat \bw \circ F^{-1}(\cdot,\omega)
}$.
\end{remark}

\subsubsection{Quantity of Interest and the Adjoint problem}
\label{setup:QoI}
In practice we are interested in computing the statistics of a
Quantity of Interest (QoI) over the stochastic domain or a subdomain
of it. We consider QoI of the form
\begin{equation}
\cor{Q(u) := \int_{\tilde{D}} q(x) u(x,\omega)\,\,dx}
\label{setup:qoi}
\end{equation}
\cor{with $q \in L^{2}(\tilde D)$ over the region $\tilde{D} \subset
  D(\omega)$ for any $\omega \in \Omega$.  Moreover, we assume that
  $\exists \delta > 0$ such that $dist(\tilde{D}, D(\omega)) < \delta$
  $\forall \omega \in \Omega$ and $F \left. \right|_{\tilde{D}} = I$
  on $\tilde{D}$ (i.e. there is no deformation of the domain on
  $\tilde{D}$).}

In this paper we restrict our attention to the computation of the mean
$\mathbb{E}[Q]$ and variance $Var[Q]:= \mathbb{E}[Q^{2}] -
\mathbb{E}[Q]^{2}$ given that the domain deformation is parameterized
by a stochastic random vector.

We first assume that $Q:H^{1}_{0}(U) \rightarrow \R$ is
a bounded linear functional. The influence function can be computed
as:
\begin{problem}
Find $\varphi \in H^{1}_{0}(U)$ such that $\forall v \in H^{1}_{0}(U)$
\begin{equation}
B(\omega; v, \varphi) = Q(v)
\label{setup:dual2}
\end{equation}
a.s. in $\Omega$.
\label{setup:Prob4}
\end{problem}

\corred{We can now pick a particular $T$ such that $\hat
  \bw=T( \hat g)$ vanishes inside $\tilde{D}$.
Therefore, we have that}
\[
Q(u) = Q(\tilde{u} + \hat \bw \circ F^{-1}) = Q(\tilde{u}),
\]
\cor{and thus}
\[
\cor{Q(\tilde u) = \int_{\tilde{D}} q(x) \tilde u(x,\omega)\,\,dx =
\int_{\tilde{D}} q \circ F (x) \tilde u \circ F(x,\omega)\,\,dx
= Q(\tilde u \circ F) = B(\omega; \tilde{u} \circ F,\varphi).}
\]


\subsection{Domain Parameterization}
\label{setup:domainparametrization}

Let $Y:=[Y_1, \dots, Y_{N}]$ be a $N$ valued random vector measurable
in $(\Omega, {\cal F}, \mathbb{P})$ taking values on
$\Gamma:=\Gamma_{1} \times \dots \times \Gamma_{N} \subset
\mathbb{R}^{N}$ and ${\cal B}(\Gamma)$ be the Borel $\sigma-$algebra.

Define the induced measure $\mu_{Y}$ on $(\Gamma,{\cal B}(\Gamma))$ as
$\mu_{Y} : = \mathbb{P}(Y^{-1}(A))$ for all $A \in {\cal
  B}(\Gamma)$. Assuming that the induced measure is absolutely
  continuous with respect to the Lebesgue measure defined on $\Gamma$,
  then there exists a density function $\rho({\bf y}): \Gamma
  \rightarrow [0, +\infty)$ such that for any event $A \in {\cal
      B}(\Gamma)$
\[
\mathbb{P}(Y \in A) := \mathbb{P}(Y^{-1}(A)) = \corm{\int_{A} \rho( {\bf y}
)\,d {\bf y}.}
\]
\noindent Now, for any measurable function $Y \in L^{1}_{P}(\Gamma)$
we let the expected value be defined as
\[
\mathbb{E}[Y] = \int_{\Gamma} {\bf y} \rho( {\bf y} )\, d
{\bf y}.
\]
\corr{Further, we define the following spaces:}
\[
\corr{L^{q}_{\rho}(\Gamma) := \left\{ v \,\,|\,\, \int_{\Omega}
|v(\by)|^{q}\, \rho(\by) d\by < \infty \right\} \,\,\mbox{and}\,\,\,
L^{\infty}_{\rho}(\Gamma) := \left\{v \,\,|\,\, \esssup_{ \by \in \Gamma}
|v(\by)| < \infty \right\}.}
\]
The mapping $F(\cdot, \omega):U \rightarrow D(\omega)$ can be
parameterized in many forms.  In this paper we restrict our attention
to the following class of mappings:

\begin{assumption} 
\corr{The map $F(\omega):U \rightarrow D(\omega)$ has the form}
\[
F(x, \omega) := x + e(x,\omega)\hat{v}(x)
\]
\corr{a.s. in $\Omega$, with $\hat{v}:U \rightarrow \R^{d}$, $\hat v :=
[\hat v_1, \dots, \hat v_d]^T$, $\hat{v}_{i} \in C^{1}(U)$ for $i = 1,
\dots, d$, and $e(\cdot,\omega): U \rightarrow
D(\omega)$. Assume that the map $F(\omega): U \rightarrow
D(\omega)$ is one-to-one almost surely.}
\label{setup:Assumption3}
\end{assumption}

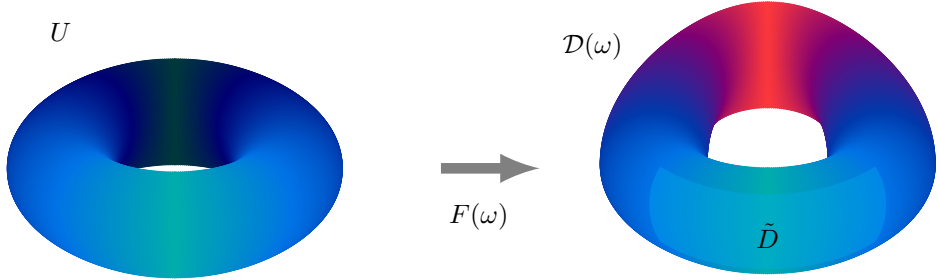
\begin{figure}
\begin{center}
\begin{tikzpicture}[scale=1] 
    \foreach \x in {90,...,-90} { 
    \pgfmathsetmacro\elrad{20*max(cos(\x),.1)}
    \pgfmathsetmacro\ltint{.9*abs(\x-45)/180}
    \pgfmathsetmacro\rtint{.9*(1-abs(\x+45)/180)}
    \definecolor{currentcolor}{rgb}{0, \ltint, \rtint}
    \draw[color=currentcolor,fill=currentcolor] 
        (xyz polar cs:angle=\x,y radius=0.75,x radius=1.5) 
        ellipse (\elrad pt and 20pt);
    \definecolor{currentcolor}{rgb}{0, \ltint, \rtint}
    \draw[color=currentcolor,fill=currentcolor] 
        (xyz polar cs:angle=180-\x,radius=0.75,x radius=1.5) 
        ellipse (\elrad pt and 20pt);
    }
\coordinate (O) at (3.5,0);
\coordinate (P) at (4.8,0);
\draw[->, >=latex, gray, line width=4 pt] (O) -- (P);
\node[above=16pt,scale=1] at (4,-1.5) {$F(\omega)$};
\node[above=16pt,scale=1] at (-1.5,1) {$U$};
\end{tikzpicture}
\begin{tikzpicture}[scale=1] 
    \foreach \x in {90,89,...,0} { 
    \pgfmathsetmacro\elrad{20*max(cos(\x),.1)}
    \pgfmathsetmacro\ltint{.9*abs(\x-45)/180}
    \pgfmathsetmacro\rtint{.9*(1-abs(\x+45)/180)}
    \pgfmathsetmacro\ztint{2*(abs(\x)/180)}
    \definecolor{currentcolor}{rgb}{\ztint,\ltint,\rtint}
    \draw[color=currentcolor,fill=currentcolor] 
        (xyz polar cs:angle=\x,y radius=1.45,x radius=1.5) 
        ellipse (\elrad pt and 20pt);
    \definecolor{currentcolor}{rgb}{\ztint,\ltint,\rtint}
    \draw[color=currentcolor,fill=currentcolor] 
        (xyz polar cs:angle=180-\x,radius=1.45,x radius=1.5) 
        ellipse (\elrad pt and 20pt);
    }

    \foreach \x in {0,...,-90} { 
    \pgfmathsetmacro\elrad{20*max(cos(\x),.1)}
    \pgfmathsetmacro\ltint{.9*abs(\x-45)/180}
    \pgfmathsetmacro\rtint{.9*(1-abs(\x+45)/180)}
    \definecolor{currentcolor}{rgb}{0, \ltint, \rtint}
    \draw[color=currentcolor,fill=currentcolor] 
        (xyz polar cs:angle=\x,y radius=.75,x radius=1.5) 
        ellipse (\elrad pt and 20pt);
    \definecolor{currentcolor}{rgb}{0, \ltint, \rtint}
    \draw[color=currentcolor,fill=currentcolor] 
        (xyz polar cs:angle=180-\x,radius=.75,x radius=1.5) 
        ellipse (\elrad pt and 20pt);
    }
    \pgfmathsetmacro\elrad{cos(-135)}
    \pgfmathsetmacro\xrad{1.5cm-20pt*\elrad}
    \pgfmathsetmacro\yrad{.75cm-20pt*sin(-135)}
    \path[fill=cyan, fill opacity=.35] 
        (xyz polar cs:angle=-135,radius=.75,x radius=1.5) 
        ++(20pt*\elrad,0) arc (0:45:20*\elrad pt and 20pt) 
        arc (-135:-45:\xrad pt and \yrad pt) 
        arc (45:-45:-20*\elrad pt and 20pt) 
        arc (-45:-135:\xrad pt and \yrad pt) 
        arc (-45:0:20*\elrad pt and 20pt);

    \node[above=16pt,scale=1] at (0,-1.8) {$\corr{\tilde{D}}$};
    \node[above=16pt,scale=1] at (-2.3,0.7) {${\cal D}(\omega)$};
\end{tikzpicture}
\end{center}
\caption{\corr{Cartoon example of stochastic domain realization from a
    reference domain. Note that the non stochastic area $\corr{\tilde{D}}$ is
    contained in the interior of $D(\omega)$. Note that this figure is
    modified from the TikZ tex code from {\it Smooth map of manifolds
      and smooth spaces} by Andrew Stacey.}}
\label{setup:fig1}
\end{figure}

\cor{We next assume the stochastic perturbation $e(x,\omega)$ with the
following parameterization:}
\begin{equation}
\corred{e(x, \omega) := \sum_{l=1}^{N} \sqrt{\mu_{l}}
b_{l}(x)Y_{l}(\omega).}
\label{setup:expansion}
\end{equation}
\cor{Denote $Y : = [Y_{1}, \dots, Y_{N}]$, and for $n = 1, \dots, N$
  let $\Gamma_{n}\equiv Y_{n}(\Omega)$ and $\mathbb{E}[Y_{n}]=0$.}
Furthermore denote $\Gamma:= \prod_{n=1}^{N} \Gamma_{n}$, and
$\rho({\bf y}):\Gamma \rightarrow \mathbb{R}_{+}$ as the joint
probability density of $Y$.  In addition, we make the following
assumptions:
\begin{assumption}~
\corm{\begin{enumerate}[1)]
\item 
$n = 1, \dots, N$, $\Gamma_{n} \equiv [-1,1]$. 
\item $b_{1},\dots, b_{N} \in W^{1,\infty}(U)$.
\item $\| b_l \|_{L^{\infty}(U)} = 1$ for $l = 1,2,\dots N$.
\item  $\mu_{l}$ are monotonically decreasing for $l = 1,2,\dots N$.
\end{enumerate}}
\label{setup:Assumption4}
\end{assumption}
\corr{Note that the parameterization of equation \eqref{setup:expansion} may
arise from a truncated Karhunen Loeve (KL) expansion of mean square
random fields. However, in general KL expansion eigenfunctions $b_l$
belong to $L^{2}(U)$, but can be extended to $L^{\infty}(U)$ or even
higher regularity if the covariance function satisfies certain
regularity properties (see \cite{Frauenfelder2005} details).}

\section{Analyticity}
\label{analyticity}

\corm{The analytic extension of the solution of Problem \ref{setup:Prob3},
with respect to the images of the stochastic variables, provides us a
form to bound the approximation error of the collocation scheme.}  In
this section we analyze the analyticity of the solution assuming a
stochastic domain but with deterministic $f$.


From the stochastic model formulated in Section \ref{setup} the
Jacobian $\partial F$ is written as
\begin{equation}
\partial F(x,\omega) = I + 
\sum_{l=1}^{N} B_{l}(x) \sqrt{\mu_{l}}  Y_{l}(\omega) 
\label{analyticity:eqn1}
\end{equation}
with
\[
B_{l}(x) := b_{l}(x) \partial \hat{v}(x) + 
\left[
 \begin{array}{cccc}
 \frac{\partial b_{l}(x)}{\partial x_{1}} \hat{v}_{1}(x)
 & \frac{\partial b_{l}(x)}{\partial x_{2}} \hat{v}_{1}(x) & \dots &
 \frac{\partial b_{l}(x)}{\partial x_{d}} \hat{v}_{1}(x) \\
 \frac{\partial b_{l}(x)}{\partial x_{1}} \hat{v}_{2}(x)
 & \frac{\partial b_{l}(x)}{\partial x_{2}} \hat{v}_{2}(x) & \dots &
 \frac{\partial b_{l}(x)}{\partial x_{d}} \hat{v}_{2}(x) \\
 \vdots & \vdots & \vdots & \vdots \\ 
 \frac{\partial b_{l}(x)}{\partial x_{1}} \hat{v}_{d}(x)
 & \frac{\partial b_{l}(x)}{\partial x_{2}} \hat{v}_{d}(x) & \dots &
 \frac{\partial b_{l}(x)}{\partial x_{d}} \hat{v}_{d}(x) \\
 \end{array}
 \right],
\]
\noindent where $\partial \hat v$ is the Jacobian of $\hat v(x)$.

\begin{assumption}~
\begin{enumerate}[a)]
\item $a \circ F$ and $\hat g$ are only a function of $x
  \in U$ and independent of $\omega \in \Omega$.

\item There exists $0 < \tilde{\delta} < 1$ such that $\sum_{l=1}^{N}
   \| B_l(x) \|_{2} \sqrt{\mu_{l}} \leq 1 -
   \tilde{\delta}$, $\forall x \in U$.
\end{enumerate}
\label{analyticity:assumption1}
\end{assumption}

\begin{remark}
   \corm{Assumption \ref{analyticity:assumption1} a)} restricts
     $a(x,\omega)$ (and $\hat{g}(x,\omega)$) to be a constant
     along the direction $\hat v(x)$.  This assumption simplifies the
     presentation of this section. However, \corm{Assumption
     \ref{analyticity:assumption1} a)} is still useful for many
     practical problems. For example, in layered materials the width
     or geometry of each layer is stochastic, but the diffusion
     coefficient for each layer is known. This example can be found in
     semi-conductor design \cite{Zhenhai2005}. Moreover, this
     assumption allows the diffusion coefficient to be non constant
     along the non stochastic directions.
\end{remark}

\corr{We now extend the mapping $\partial F(x,{\bf y}) = I + R(x,{\bf y})$},
with $R(x,{\bf y}) := \sum_{l=1}^{N} \sqrt{\mu_{l}}
B_{l}(x)y_{l}$, to the complex plane.  First, for any $0 < \beta <
\tilde{\delta}$ define the following region in $\C^{N}$:
\begin{equation}
\corr{
\Theta_{\beta} : = \left\{ {\bf z} \in \mathbb{C}^{N};\,
{\bf z} = {\bf y} + \bw,\,{\bf y} \in [-1,1]^{N},\,
 \sum_{l=1}^{N}  \sup_{x \in U}  \| B_l(x) \|_{2} 
\sqrt{\mu_{l}} |w_{l}| \leq \beta
\right\},
}
\label{analyticity:region}
\end{equation}
\corr{where $\bw:=[w_1, \dots, w_N] \in \mathbb{C}^{N}$.}
 \begin{assumption}
 \corm{Assume that $f: \R^d \rightarrow \R$ can be analytically
   extended in $\C^{d}$. Furthermore assume that the analytic
   extension $\Real (f \circ F)(\cdot,\bz), \Imag (f \circ
   F)(\cdot,\bz) \in L^{2}(U)$ $\forall \bz \in \Theta_{\beta}$.}
\label{analyticity:assumption2}
\end{assumption}

\corr{Note that in the rest of the section for sake of simplicity we
  shall refer to $R(x,{\bf y})$ or $R(x,{\bf z})$ as $R({\bf y})$ or
  $R({\bf z})$ and similarly for $F(x,\by)$ unless emphasis is
  needed.} We shall now prove several lemmas that will be useful to
prove the main results (Theorem
\ref{analyticity:theorem1}). 

\corg{Note that we apply many singular value inequalities and
  properties to prove Lemmas \ref{analyticity:lemma2},
  \ref{analyticity:lemma3} and parts of section \ref{erroranalysis},
  that can be found in \cite{Jabbari}. We recall some of the most
  commonly used. If $A,B \in \C^{n,n}$ then} \corr{
\begin{enumerate}
\item $\sigma_{max}(A + B) \leq \sigma_{max}(A) + \sigma_{max}(B)$.
\item $\sigma_{min}(A + B) \geq \sigma_{min}(A) - \sigma_{max}(B)$.
\item $\sigma_{max}(AB) \leq \sigma_{max}(A)\sigma_{max}(B)$.
\item $\sigma_{min}(AB) \geq \sigma_{min}(A)\sigma_{min}(B)$.
\end{enumerate}
In addition, if $A,B$ are Hermitian then
\begin{enumerate}
\item $\lambda_{max}(A + B) \leq \lambda_{max}(A) + \lambda_{max}(B)$
  (Lidskii inequality).
\item $\lambda_{min}(A + B) \geq \lambda_{min}(A) + \lambda_{min}(B)$
  (Dual Lidskii inequality).
\end{enumerate}
}

\begin{lem}
Under Assumption \ref{analyticity:assumption1} we have that $\forall
\by \in [-1,1]^{N}$ and $x \in U$
\begin{enumerate}[i)]
\item $\sigma_{max}(\partial F(\by)) \leq 2 - \tilde{\delta}$,
\item $\sigma_{min}(\partial F(\by)) \geq \tilde{\delta}$,
\item $(2 - \tilde{\delta})^{d} \geq det(\partial F(\by)) \geq \tilde{\delta}^{d}$.
\end{enumerate}
\label{analyticity:lemma0}
\end{lem}
\begin{pf}
\begin{enumerate}[i)]
\item $\| \partial F(\by) \|_2 \leq 1 + \sup_{x \in U} 
\sum_{l=1}^{N} \| B_l(x) \|_{2} \sqrt{\mu_{l}}
 \leq 2 - \tilde{\delta}$.
\item $\sigma_{max}(\partial F(\by) - I) = \| \sum_{l=1}^{N} B_l(x) 
\sqrt{\mu_{l}}y_l \|_2
\leq 1 - \tilde{\delta} \Rightarrow
\sigma_{min}(\partial F(\by)) 
= \sigma_{min}(I + \partial F(\by) - I) \geq \sigma_{min}(I)
-\sigma_{max}(\partial F(\by) - I) \geq 1 - (1-\tilde{\delta}) 
= \corm{\tilde{\delta}}.$ 
\item The result follows from the following fact: If $A \in \C^{d
    \times d}$ we have that $\sigma_{min}(A) \leq | \lambda_{l}(A)|
  \leq \sigma_{max}(A)$ for all $l = 1, \dots, d$ (see \cite{Jabbari}).
\end{enumerate}
\end{pf}

\begin{lem} Let $0 < \beta < \tilde{\delta} \frac{\log{2}}{d + \log{2}}$ 
and $\alpha = 2 - exp(\frac{d \beta }{\tilde{\delta} - \beta}) > 0$
then $\forall {\bf z} \in \Theta_{\beta}$ and $\forall x \in U$ we
have that $det(\partial F({\bf z}))$ is analytic and
\begin{enumerate}[i)]
\item $|det(\partial F({\bf z})) 
| \geq \tilde{\delta}^{d} \alpha$,
\item $|det(\partial F({\bf z})) | \leq (2 - \tilde{\delta})^{d} 
(2 - \alpha)
$,
\item $\Real det(\partial F({\bf z})) \geq \tilde{\delta}^{d} \alpha$,
  $|\Imag det(\partial F({\bf z}))| \leq (2-\tilde{\delta})^{d} (1 -
  \alpha)$.
\end{enumerate}
\label{analyticity:lemma1}
\end{lem}
\begin{pf}
For all ${\bf z} \in \Theta_{\beta}$ we have that
\[
\partial F(x,{\bf z}) = I + 
\sum_{l=1}^{N} B_{l}(x) \sqrt{\mu_{l}} y_{l} +
\sum_{l=1}^{N} B_{l}(x) \sqrt{\mu_{l}} w_{l}
= 
I + R({\bf y}) + R({\bf w})
\]
and let $Q({\bf y},{\bf w}) = I + \partial F({\bf y}) ^{-1} R({\bf
  w})$ so that $\partial F({\bf z})= \partial F({\bf y})Q({\bf y},{\bf
  w})$.

We now study $det(Q({\bf y},{\bf w}))$ for all ${\bf z} \in
\Theta_{\beta}$ by using the following identity \cite{Ipsen2011}: If
$A \in \C^{d \times d}$ and $\sigma_{max}(A) < 1$ then
\[
det(I + A) = 
1 + \corm{\sum_{k=1}^{\infty} \frac{1}{k!}  \left( -\sum_{j=1}^{\infty}
  \frac{(-1)^{j}}{j} tr(A^{j}) \right)^{k}.}
\]
It follows that
\[
\begin{split}
|det(Q({\bf y},{\bf w})) - 1|
&\leq \sum_{k=1}^{\infty} \frac{1}{k!}  \left( \sum_{j=1}^{\infty}
  \frac{1}{j} |tr( ( \partial F(\by)^{-1} R(\bw)
)^{j})| \right)^{k}\\
&\leq \sum_{k=1}^{\infty} \frac{1}{k!}  \left( \sum_{j=1}^{\infty}
  \frac{1}{j} \sum_{l = 1}^{d}| \lambda_{l}( \partial F(\by)^{-1} R(\bw)
)|^{j}
 \right)^{k}\\
&\leq \sum_{k=1}^{\infty} \frac{1}{k!}  \left( \sum_{j=1}^{\infty}
  \frac{1}{j} d \sigma^{-j}_{min}(\partial F(\by)) \sigma^{j}_{max}(R(\bw))
\right)^{k}\\
&\leq \sum_{k=1}^{\infty} \frac{1}{k!}  \left( \sum_{j=1}^{\infty}
  d (\beta / \tilde{\delta} )^{j}
\right)^{k} \\
&(\mbox{From Lemma \ref{analyticity:lemma0}}) \\
&= \sum_{k=1}^{\infty} \frac{1}{k!}  \left( 
  \frac{d (\beta / \tilde{\delta} )}{1 - \beta / \tilde{\delta} }
\right)^{k}= \exp \left( \frac{d \beta }{\tilde{\delta} - \beta} \right) - 1,
\end{split}
\]
and $|det(Q({\bf y},{\bf w}))| \leq \exp \left( \frac{d
  \beta}{\tilde{\delta} - \beta} \right)$. 

Now, it follows that if $\beta < \frac{\tilde{\delta} \log{2}}{d +
  \log{2}}$ then
\begin{enumerate}[i)]
\item $|det(Q({\bf y},{\bf w}))| \geq 1 - |det(Q({\bf y},{\bf w})) - 1| \geq 2 - 
\exp \left( \frac{d \beta}{\tilde{\delta} -
    \beta} \right)  = \alpha > 0$,
\item $\Real det(Q({\bf y},{\bf w})) \geq 1 - |det(Q({\bf y},{\bf
    w})) - 1| \geq \alpha > 0$,
\item $|\Imag det(Q({\bf y},{\bf w}))| \leq |det(Q({\bf y},{\bf w})) -
  1| \leq 1 - \alpha$.
\end{enumerate}
Finally we have that $det(\partial F(\bz)) = det(\partial F(\by))
det(Q(\by,\bw)) $. \corr{It is easy now to see that $det(\partial F(\bz))$
is analytic $\forall \bz \in \Theta_{\beta}$ since $det(\partial
F(\bz))$ is a finite polynomial of $\bz$.} The rest of the result
follows by applying Lemma \ref{analyticity:lemma0}.
\end{pf}



\begin{lem} 
Let $G({\bf z}) := (a \circ F) det( \partial F({\bf z})) \partial
F^{-1}({\bf z}) \partial F^{-T}({\bf z})$ and suppose
\begin{equation}
0 < \beta < \min \corm{\Bigg\{ \frac{ \tilde{\delta} \log{(2 -
      \gamma)}}{d + \log{ (2 - \gamma) }}, \sqrt{1 +
    \tilde{\delta}^2/2} - 1 \Bigg\},}
\label{analyticity:eqn9}
\end{equation}
where \corr{$\gamma := \frac{2\tilde{\delta}^2 +
    (2-\tilde{\delta})^{d}}{\tilde{\delta}^{d} +
    (2-\tilde{\delta})^{d}}$} then \corred{$\forall x \in U$ we have
  that} $\Real G({\bf z})$ is positive definite $\forall \bz \in
\Theta_{\beta}$ and

\begin{enumerate}[(a)]
\item $\lambda_{min}(\Real G(\bz)^{-1}) \geq
  B(\tilde{\delta},\beta,d,a_{max})>0$\corm{, where}
\[
\corr{
B(\tilde{\delta},\beta,d,a_{max}) 
:=
\frac{\tilde{\delta}^{d+1}\alpha 
( \tilde \delta - 2 \beta
) - 2\beta(2 + (\beta - \tilde{\delta}))
(1 - \alpha)(2 - \tilde{\delta})^{d}
}
{a_{max}(2 - \tilde{\delta})^{2d} (2 - \alpha)^{2}}. 
}
\]
\item $\lambda_{max}(\Real G(\bz)^{-1}) \leq D(\tilde{\delta},\beta,d,a_{min})
  < \infty$\corm{, where}
\[
\begin{split}
D(\tilde{\delta},\beta,d,a_{min}) 
&:=\frac{1}{a_{min}
\tilde{\delta}^{2d}\alpha^{2}}
\left[(2 - \tilde{\delta})^{d} (2 - \alpha)
\corr{(2 - \tilde{\delta} + \beta)^{2}}
\right. \\
&\left.+ 2(1 - \tilde{\delta})^{d} 
(2 - \alpha)\beta(2 + (\beta - \tilde{\delta}))\right].
\end{split}
\]
\item $\sigma_{max}(\Imag G(\bz)^{-1}) \leq
  C(\tilde{\delta},\beta,d,a_{min}) < \infty$\corm{, where}
\[
\begin{split}
C(\tilde{\delta},\beta,d,a_{min}) 
&:=\frac{1}{a_{min}
\tilde{\delta}^{2d}\alpha^{2}}
\left[(2 - \tilde{\delta})^{d} (2 - \alpha)
2\beta(2 + (\beta - \tilde{\delta}))
\right. \\
& \left. + (2 - \tilde{\delta})^{d} (1 - \alpha)
(((2-\tilde{\delta}) + \beta)^{2} + \beta^{2}) \right].
\end{split}
\]
\end{enumerate}
\label{analyticity:lemma2}
\end{lem}
\begin{pf}
{\bf (a)} To simplify the proof we use the property that if $\Real
G^{-1}({\bf z})$ is positive definite then $\Real G({\bf z})$ is
positive definite (From (b) in \cite{london1981}) , but first we
derive bounds for $\Real \partial F({\bf z})^{T} \partial F({\bf z}) $
and $\Imag \partial F({\bf z})^{T} \partial F({\bf z})$. Now,
\corred{$\forall x\in U$ and $\forall \bz \in \Theta_{\beta}$}
we have that
\[
\begin{split}
  \Real \partial F({\bf z})^{T} \partial F({\bf z}) & = 
\corr{\Real [(I +  R({\bf y}) + R({\bf w})
)^{T}(I + R({\bf y}) + R({\bf w})
  )] }
\\
  & = 
(I + R({\bf y}) + R_{r}({\bf w})
)^{T}(I + R({\bf y}) + R_{r}({\bf w})
) -
  \corm{R_{i}({\bf w})^{T}R_{i}({\bf w}),}
\end{split}
\]
where $R({\bf w})= R_{r}({\bf w}) + iR_{i}({\bf w})$. By applying the
dual Lidskii inequality (if $A,B \in \mathbb{C}^{d \times d}$ are
Hermitian then $\lambda_{min}(A+B) \geq \lambda_{min}(A) +
\lambda_{min}(B)$) we obtain
\begin{equation}
\begin{split}
  \lambda_{min}(\Real \partial F({\bf z})^{T} \partial F({\bf z})) 
&
  \geq \lambda_{min}(
(I + R({\bf y}) + R_{r}({\bf w})
)^{T}(I + R({\bf y}) + R_{r}({\bf w})
)) \\
&- \lambda_{max}(
  R_{i}({\bf w})^{T}R_{i}({\bf w})) \\
&=
\sigma^{2}_{min}(
I + R({\bf y}) + R_{r}({\bf w})
)
- \sigma^{2}_{max}(
  R_{i}({\bf w})) \\
&\geq
(\sigma_{min}(
I + R({\bf y})) - 
\sigma_{max}( 
R_{r}({\bf w})
))^{2}
- \sigma^{2}_{max}(
  R_{i}({\bf w})) 
\\
&\geq
(\tilde{\delta} - \beta)^{2} - \beta^{2}.
\end{split}
\label{analyticity:eqn5}
\end{equation}
It follows that if $\beta < \tilde{\delta}/2$ then \corred{$\forall
  x\in U$ and $\forall \bz \in \Theta_{\beta}$}
\[
\begin{split}
  \lambda_{min}(\Real \partial F({\bf z})^{T} \partial F({\bf z})) 
&\geq \tilde{\delta}(\tilde{\delta} - 2 \beta) > 0.
\end{split}
\]
and is positive definite. We see that \corred{$\forall x\in U$ and
  $\forall \bz \in \Theta_{\beta}$},
\begin{equation}
\begin{split}
\corr{\max_{i = 1,\dots,d}|\lambda_{i}(\Imag \partial F({\bf
  z})^{T} \partial F({\bf z}))|}
& \leq
\corr{\sigma_{max}(
R_{i}({\bf w})^{T}
(I + R({\bf y}) + R_{r}({\bf w})) } \\
&+
(I + R({\bf y}) + R_{r}({\bf w}))^{T}
R_{i}({\bf w})) \\
& \leq
2 \sigma_{max} (R_{i}({\bf w}))
\sigma_{max} (I + R({\bf y}) + R_{r}({\bf w})) \\
& \leq  2 \beta(2 + (\beta - \tilde{\delta})).
\end{split}
\label{analyticity:eqn6}
\end{equation}
We now have that 
\corred{$\forall x\in U$ and $\forall \bz \in \Theta_{\beta}$}
\begin{equation}
\begin{split}
\corr{\lambda_{min}( (a \circ F) ^{-1}} \Real (\xi^{-1}({\bf z}) 
\partial
F({\bf z}) ^{T} \partial F({\bf z}) ) ) 
&\geq \frac{1}{a_{max}|\xi({\bf z})|^{2}}  \lambda_{min}(
\xi_{R}(\bz)
 \Real F({\bf z}) ^{T} \partial F({\bf z}) \\
&+
\xi_{I}(\bz)
 \Imag F({\bf z}) ^{T} \partial F({\bf z}) )
\\
&\geq 
\frac{1}{a_{max}|\xi({\bf z})|^{2}}(  
\xi_{R}(\bz) 
\lambda_{min}( \Real \partial F({\bf z})^{T} \partial F({\bf z}) ) \\
&- |\xi_{I}(\bz)|
\corr{|\max_{i = 1,\dots,d} \lambda_{i}( \Imag \partial F({\bf z})^{T} \partial F({\bf z}) )| )},
\end{split}
\label{analyticity:eqn10}
\end{equation}
\noindent where $\xi({\bf z}) := \xi_{R}({\bf z}) + i\xi_{I}({\bf z})
= det(I + R({\bf z}) ) $. From Lemma \ref{analyticity:lemma2}
\corred{$\forall x\in U$ and $\forall \bz \in \Theta_{\beta}$} we have
that $|\xi({\bf z}) |^{-1} \geq (2 - \tilde{\delta})^{-d} (2 -
\alpha)^{-1} > 0$ whenever ${\bf z} \in \Theta_{\beta}$ \corred{and
  thus}
\begin{equation}
\begin{split}
\corr{\max_{i = 1,\dots,d}|\lambda_{i}(\Imag \partial F({\bf
  z})^{T} \partial F({\bf z}))|}
& \leq
\corr{\max_{i = 1,\dots,d}|\lambda_{i}}
(R_{i}({\bf w})^{T}
(I + R({\bf y}) + R_{r}({\bf w})) \\
& +
(I + R({\bf y}) + R_{r}({\bf w}))^{T}
R_{i}({\bf w}))|
\\
& \leq  2 \beta(2 + (\beta - \tilde{\delta})).
\end{split}
\label{analyticity:eqn13}
\end{equation}

From Lemma \ref{analyticity:lemma0} (iii) if \corr{$\beta <
  \frac{\tilde{\delta} \log{ \gamma }}{d + \log{ \gamma }}$} , $\gamma
:= \frac{2 \tilde{\delta}^d +
  (2-\tilde{\delta})^{d}}{\tilde{\delta}^{d} +
  (2-\tilde{\delta})^{d}}$, then $\xi_R(\bz) > |\xi_I(\bz)|$, $\forall
\bz \in \Theta_{\beta}$ and \corred{$\forall \bz \in \Theta_{\beta}$}.  From
inequalities \eqref{analyticity:eqn5} and \eqref{analyticity:eqn6} we
have that if $\beta < \sqrt{1 + \tilde{\delta}^2/2} - 1$ then
\corred{$\forall x\in U$ and $\forall \bz \in \Theta_{\beta}$}
\[
\corr{ \lambda_{min}(\Real \partial F({\bf z})^{T} \partial F({\bf z}))
  > \max_{i = 1,\dots,d} |\lambda_{i}(\Imag \partial F({\bf z})^{T}
  \partial F({\bf z}))|}
\]
and $\lambda_{min}(\Real G(\bz)^{-1}) \geq B$
$(\tilde{\delta},\beta,d,a_{max})>0$\corm{, where}
\[
\corr{ 
B(\tilde{\delta},\beta,d,a_{max}) 
:=
\frac{\tilde{\delta}^{d+1}\alpha 
( \tilde \delta - 2 \beta
) - 2\beta(2 + (\beta - \tilde{\delta}))
(1 - \alpha)(2 - \tilde{\delta})^{d}
}
{a_{max}(2 - \tilde{\delta})^{2d} (2 - \alpha)^{2}}}.
\]
From London's Lemma \cite{london1981} it follows that $\Real G({\bf
  z})$ is positive definite $\forall \bz \in \Theta_{\beta}$.  

{\bf (b)} By applying the Lidskii inequality (If $A,B \in
\mathbb{C}^{d \times d}$ are Hermitian then $\lambda_{max}(A+B) \leq
\lambda_{max}(A) + \lambda_{max}(B)$) we have that \corred{$\forall
  x\in U$ and $\forall \bz \in \Theta_{\beta}$}
\begin{equation}
\begin{split}
  \lambda_{max}(\Real \partial F({\bf z})^{T} \partial F({\bf z})) 
&
  \leq \lambda_{max}(
(I + R({\bf y}) + R_{r}({\bf w})
)^{T}(I + R({\bf y}) + R_{r}({\bf w})
)) \\
&- \lambda_{min}(
  R_{i}({\bf w})^{T}R_{i}({\bf w})) \\
&=
\sigma^{2}_{max}(
I + R({\bf y}) + R_{r}({\bf w})
)
- \sigma^{2}_{min}(
  R_{i}({\bf w})) \\
&\leq
(\sigma_{max}(
I + R({\bf y})) +
\sigma_{max}( 
R_{r}({\bf w})
))^{2}
\\
&\leq
\corr{(2 - \tilde{\delta} + \beta)^{2}}.
\end{split}
\label{analyticity:eqn12}
\end{equation}
From inequalities \eqref{analyticity:eqn12} and \eqref{analyticity:eqn13},
and Lemmas \ref{analyticity:lemma0} and \ref{analyticity:lemma1},
we obtain \corred{$\forall x\in U$ and $\forall \bz \in \Theta_{\beta}$}
\[
\begin{split}
\lambda_{max}(\Real G(\bz)^{-1}) 
&\leq
\frac{
|\xi_{R}(\bz)| 
\lambda_{max}(
\Real \partial F({\bf z})^{T} \partial F({\bf z}) ) 
+ |\xi_{I}(\bz)|
\corr{|\max_{i}|\lambda_{i}}( 
\Imag \partial F({\bf z})^{T} \partial F({\bf z}) )|
}{a_{min}|\xi({\bf z}) |^{2}} \\
&\leq D(\tilde{\delta},\beta,d,a_{min}) < \corm{\infty,}
\end{split}
\] 
where
\[
\begin{split}
D(\tilde{\delta},\beta,d,a_{min}) 
&:=\frac{1}{a_{min}
\tilde{\delta}^{2d}\alpha^{2}}
\left[(2 - \tilde{\delta})^{d} (2 - \alpha)\corr{(2 
- \tilde{\delta} + \beta)^{2}}
\right. \\
&\left.+ 2(2 - \tilde{\delta})^{d} 
(1 - \alpha)\beta(2 + (\beta - \tilde{\delta}))\right].
\end{split}
\]

{\bf (c)} Similarly, \corred{$\forall x\in U$ and $\forall \bz \in
  \Theta_{\beta}$}, we can bound
\begin{equation}
\begin{split}
\sigma_{max}(\Imag \partial F({\bf
  z})^{T} \partial F({\bf z}))
& \leq
\sigma_{max}(
R_{i}({\bf w})^{T}
(I + R({\bf y}) + R_{r}({\bf w})) \\
& +
(I + R({\bf y}) + R_{r}({\bf w}))^{T}
R_{i}({\bf w}))
\\
& \leq
2\sigma_{max}(
R_{i}({\bf w}))
\sigma_{max}(I + R({\bf y}) + R_{r}({\bf w}))
\\
& \leq  2 \beta(2 + (\beta - \tilde{\delta})).
\end{split}
\label{analyticity:eqn14}
\end{equation}
and
\begin{equation}
\begin{split}
  \sigma_{max}(\Real \partial F({\bf z})^{T} \partial F({\bf z})) 
&
  \leq \sigma_{max}(
(I + R({\bf y}) + R_{r}({\bf w})
)^{T}(I + R({\bf y}) + R_{r}({\bf w})
)) \\
&+ \sigma_{max}(
  R_{i}({\bf w})^{T}R_{i}({\bf w})) \\
&=
\sigma^{2}_{max}(
I + R({\bf y}) + R_{r}({\bf w})
)
+ \sigma^{2}_{max}(
  R_{i}({\bf w})) \\
&\leq
(\sigma_{max}(
I + R({\bf y})) +
\sigma_{max}( 
R_{r}({\bf w})
))^{2}
+ \sigma^{2}_{max}(
  R_{i}({\bf w})) 
\\
&\leq
((2-\tilde{\delta}) + \beta)^{2} + \beta^{2}.
\end{split}
\label{analyticity:eqn15}
\end{equation}
From inequalities \eqref{analyticity:eqn14} and \eqref{analyticity:eqn15},
and Lemmas \ref{analyticity:lemma0} and \ref{analyticity:lemma1} we obtain
\corred{$\forall x\in U$ and $\forall \bz \in  \Theta_{\beta}$}
\[
\begin{split}
\sigma_{max}(\Imag G(\bz)^{-1}) 
&\leq
\frac{
\sigma_{max}(
\xi_{R}(\bz) 
\Imag \partial F({\bf z})^{T} \partial F({\bf z}) 
- \xi_{I}(\bz) 
\Real \partial F({\bf z})^{T} \partial F({\bf z}))
}{a_{min}|\xi({\bf z}) |^{2}} \\
&\leq 
\frac{
|\xi_{R}(\bz)|
\sigma_{max}(
\Imag \partial F({\bf z})^{T} \partial F({\bf z}) ) 
+ |\xi_{I}(\bz)| 
\sigma_{max}( \Real \partial F({\bf z})^{T} \partial F({\bf z}) )
}{a_{min}|\xi({\bf z}) |^{2}}
\\
& \leq  C(\tilde{\delta},\beta,d,a_{min}) < \infty,
\end{split}
\]
where
\[
\begin{split}
C(\tilde{\delta},\beta,d,a_{min}) 
&:=\frac{1}{a_{min}
\tilde{\delta}^{2d}\alpha^{2}}
\left[(2 - \tilde{\delta})^{d} (2 - \alpha)
2\beta(2 + (\beta - \tilde{\delta}))
\right. \\
& \left. + (2 - \tilde{\delta})^{d} (1 - \alpha)
(((2-\tilde{\delta}) + \beta)^{2} + \beta^{2}) \right].
\end{split}
\]
\end{pf}
\begin{lem} 
\corr{For all $\bz \in \Theta_{\beta}$ \corred{and $x \in U$}}
\[
\lambda_{min}(\Real G(\bz)) \geq
\varepsilon(\tilde{\delta},\beta,d,a_{max},a_{min}) \corm{> 0,}
\]
where
\begin{equation}
\varepsilon(\tilde{\delta},\beta,d,a_{max},a_{min}) := \frac{1}
           {\left(1 + \left(
             \frac{C(\tilde{\delta},\beta,d,a_{min})}{B(\tilde{\delta},\beta,d,a_{max})}
             \right) ^{2} \right)D(\tilde{\delta},\beta,d,a_{min}) }.
\label{analyticity:eqn16}
\end{equation}
\label{analyticity:lemma3}
\end{lem}
\begin{pf} From Lemma \ref{analyticity:lemma2} $\Real G(\bz)$ is 
positive definite \corred{$\forall x\in U$ and $\forall \bz \in
  \Theta_{\beta}$}, where $\beta$ satisfies
\eqref{analyticity:eqn9}. It follows from the Lemma in
\cite{london1981} that $G(\bz) = Q(I +i \Lambda)Q^{*}$, where $Q$ is a
non-singular matrix, $\Lambda:=diag(\alpha_1,\dots,\alpha_d)$ and
$\alpha_{1},\dots,\alpha_{d}$ are real. Since $G(\bz)$ is symmetric
then $\Real G(\bz) = (1/2)(G(\bz) + G(\bz)^{*})$ and it is simple to
see that $\Real G(\bz) = QQ^{*}$. \corr{Note that in
  \cite{london1981}, the real part of a generic complex matrix $A$
  (i.e. $(1/2)(A + A^{*})$) is not in general $\Real A$.}

We need now to show that \corred{$\forall x\in U$ and $\forall \bz \in
  \Theta_{\beta}$} $\lambda_{min}(\Real G(\bz)) = \sigma^{2}_{min}(Q)
\geq \varepsilon > 0$, \corr{with $\varepsilon$ defined in equation
  \eqref{analyticity:eqn16}}.  Applying (b) in \cite{london1981} we
have that
\[
G(\bz)^{-1} = (DQ^{-1})^{*}
(I - i \Lambda) 
D\corm{Q^{-1},}
\]
where $D := diag((1 + \alpha_1^2)^{-1/2}, \dots, (1 +
\alpha_d^2)^{-1/2}) = (I + \Lambda^{2})^{-1/2}$.  It follows that
$\Real G(\bz)^{-1} = (DQ^{-1})^{*}DQ^{-1}$ \corred{$\forall x\in U$
  and $\forall \bz \in \Theta_{\beta}$},
\[
\lambda_{max}(\Real G(\bz)^{-1})
= 
\sigma^{2}_{max}(DQ^{-1}) \geq
 \sigma^{2}_{min}(D) 
\sigma^{2}_{max}(Q^{-1}) 
= \sigma^{2}_{min}(D) 
\sigma^{-2}
_{min}(Q),
\]
and therefore 
\[
\sigma^{2}_{min}(Q) 
\geq
\corr{
\frac{\sigma^{2}_{min}(D)}{\lambda_{max}(\Real G(\bz)^{-1})} 
= 
\frac{\sigma_{min}((I + \Lambda^{2})^{-1})}
{\lambda_{max}(\Real G(\bz)^{-1}) } 
\geq
\frac{(1 + \sigma^{2}_{max}(\Lambda))^{-1}}
{\lambda_{max}(\Real G(\bz)^{-1})}}.
\]
Now, $\Imag G(\bz)^{-1} = (DQ^{-1})^{*}(-\Lambda)DQ^{-1}$ and
\corred{$\forall x\in U$ and $\forall \bz \in \Theta_{\beta}$}
\[
\sigma_{max}(\Imag G(\bz)^{-1}) 
\geq \sigma^{2}_{min}(DQ^{-1}) \sigma_{max}
(\Lambda).
\]
Since $\Real G(\bz)^{-1} = (DQ^{-1})^{*}DQ^{-1}$ then
\corred{$\forall x\in U$ and $\forall \bz \in  \Theta_{\beta}$}
$\lambda_{min}(\Real G(\bz)^{-1}) = \sigma^{2}_{min}(DQ^{-1})$ and
\[
A(\tilde{\delta},\beta,d,a_{max},a_{min}) : = \frac{\sigma_{max}(\Imag G(\bz)^{-1})
}{\lambda_{min}(\Real G(\bz)^{-1})
} \geq
\sigma_{max}(\Lambda).
\]
It follows that \corred{$\forall x\in U$ and $\forall \bz \in
  \Theta_{\beta}$}
\begin{equation}
\lambda_{min}(\Real G(\bz)) 
\geq
\frac{1}
{(1 + A^{2})|\lambda_{max}(\Real G(\bz)^{-1})| }.
\label{analyticity:eqn11}
\end{equation}
From Lemma \ref{analyticity:lemma2} (a) we have that
$\lambda_{min}(\Real G(\bz)^{-1}) \geq
\corr{B(\tilde{\delta},\beta,d,a_{max})>0}$. From Lemma
\ref{analyticity:lemma2} (c) we have that $\sigma_{max}(\Imag
G(\bz)^{-1}) \leq C(\tilde{\delta},\beta,d,a_{min}) < \infty$. This
implies $\sigma_{max}(\Lambda) \leq
A(\tilde{\delta},\beta,d,a_{max},a_{min}) < \infty$.  Finally from
Lemma \ref{analyticity:lemma2} (b) $\lambda_{max}(\Real G(\bz)^{-1})
\leq D(\tilde{\delta},\beta,d,a_{min})$ $< \infty.$ We conclude that
\corred{$\forall x\in U$ and $\forall \bz \in  \Theta_{\beta}$}
\[
\lambda_{min}(\Real G(\bz)) 
\geq
\varepsilon(\tilde{\delta},\beta,d,a_{max},a_{min}) > 0 .
\]
\end{pf}


We are now ready to prove the main result of this section. 



\begin{thm} Let $0 < \tilde{\delta} < 1$ \corg{
then the solution $\tilde{u} \circ F:\Gamma \rightarrow H^{1}_{0}(U)$
of Problem \ref{setup:Prob3}} can be extended holomorphically in
  $\Theta_{\beta} \subset \C^{N}$ (see equation
  \eqref{analyticity:region}) if
\[
\beta < min \left\{ \tilde{\delta} \frac{\log{(2 - \gamma)}}{d + \log{(2 -
    \gamma)}}, \sqrt{1 + \tilde{\delta}^2/2} - 1 \right\},
\]
where \corr{$\gamma := \frac{2\tilde{\delta}^2 +
    (2-\tilde{\delta})^{d}}{\tilde{\delta}^{d} +
    (2-\tilde{\delta})^{d}}$}. 
\label{analyticity:theorem1}
\end{thm}

\begin{pf}
The strategy for this proof \corm{is to show} that the function \corg{$(\tilde u
  \circ F)(\cdot,\by)$ can be extended on $\Theta_{\beta}$ and is
  analytic in each variable separately. Then apply Hartog's Theorem}
(Chap1, p32, \cite{Krantz1992}) and Osgood's Lemma (Chap 1, p 2,
\cite{Gunning1965}) to show that such an extension is analytic in
$\Theta_{\beta}$.

\corg{For $n = 1, \dots, N$ consider the map \cor{$\Psi(\by): \Gamma
    \rightarrow H^{1}_{0}(U)$} where
\[
\Psi(\by) := (\tilde u \circ F)(\by),
\]
\noindent for any arbitrary points $\by \in \Gamma$. Furthermore,
consider the extension of $\by \rightarrow \bz$, where $\bz \in
\C^{N}$.}

Since $\beta < \tilde{\delta}$ the series
\[
\partial F^{-1}({\bf z}) = (I + R({\bf z}) )^{-1} = I +
\sum_{k=1}^{\infty} R({\bf z})^{k}
\]
is convergent $\forall {\bf z} \in \Theta_{\beta}$. It follows that
each entry of $\partial F({\bf z})^{-1}$ is analytic \cor{in $
  \Theta_{\beta}$}. From Lemma \ref{analyticity:lemma1} it follows
that the entries of $G({\bf z})$ are analytic \corg{on
  $\Theta_{\beta}$}.

\corg{Let $\bar{\Psi}(\bz) := [\Psi_{R}(\bz) , \,\,\,
    \Psi_{I}(\bz)]^{T}$, with $\Psi_{R}(\bz) := Re\,\, \Psi(\bz)$ and
  $\Psi_{I}(\bz) := Im\,\, \Psi(\bz) $, be the solution (in the weak
  sense) of the problem
\begin{equation}
-\nabla \cdot \hat{G}(\bz) \nabla \bar \Psi(\bz) = \hat{f}(\bz),
\label{analyticity:eqn2}
\end{equation}
\noindent where 
\[
\hat G (\bz) := \left(
\begin{array}{cc}
G_{R}(\bz) &  -G_{I}(\bz)\\
G_{I}(\bz) &   G_{R}(\bz)\\
\end{array}
\right),\,\,\,
\hat{f} := \left(
\begin{array}{c}
\tilde f_{R}(\bz) \\
\tilde f_{I}(\bz) \\
\end{array}
\right),
\]
$G_{R}(\bz):= \Real(G(\bz))$, $G_{I}(\bz):=\Imag(G(\bz))$, $\tilde
f_R(\bz) := \Real \tilde{f}(\bz)$ and $\tilde f_I = \Imag
\tilde{f}(\bz)$. Note that $\tilde{f}$(\bz) refers to extension of the
right hand side of the weak formulation i.e.  $\tilde{l}(\bz;v)$ for
all $v \in H^1_{0}(U)$.  Thus $\tilde f_R(\bz):= \Real \{ (f \circ
F)(\cdot,\bz) |\partial F(\bz)| + \nabla \cdot (a \circ F) C^{-1}(\bz)
|\partial F(\bz)| \corred{\nabla \hat \bw} \}$ and
similarly for $\tilde f_I(\bz)$.}

The system of equations (\ref{analyticity:eqn2}) has a unique
  solution. Indeed, from Lemma \ref{analyticity:lemma3} we have that
  $\lambda_{min}(G_{R}({\bf z})) > 0$ $\forall \bz \in \Theta_{\beta}$
  \corred{$\forall x \in U$}.  Since $G_{I}({\bf z})$ is symmetric, it
  follows that $G({\bf z})$ is positive definite \corred{$ \forall x
    \in U$}, hence the well posedness of problem
  \eqref{analyticity:eqn2} by Lax-Milgram.  Moreover, the solution
  $\Psi(\bz)$ coincides with $(\tilde u \circ F)(\by)$, $\by \in
  \Gamma$, when $\bz \in \Gamma$, therefore it is a complex
  continuation of $\tilde u \circ F$ on $\Theta_{\beta}$.

\corg{To show that $\Psi(\bz): \Theta_{\beta} \rightarrow
  H^{1}_{0}(U)$ is holomorphic on $\Theta_{\beta}$ we focus on the
  $n^{th}$ variable $z_n$, $n = 1,\dots,N$, which we write as $z_n = s
  + iw$, $s,w \in \R$, and show that the Cauchy-Riemann conditions
are satisfied. But first we have to show that the derivatives
$\partial_{s} \Psi$ and $\partial_{w} \Psi$ exist.  Now,
differentiating (\ref{analyticity:eqn2}) with respect to $s= \Real
z_n$ and $w = \Imag z_n$ we obtain}
\begin{eqnarray}
- (\nabla \cdot G_{R} \nabla \partial_{s} \Psi_{R}\corg{(\bz)} 
- \nabla \cdot G_{I} \nabla \partial_{s} \Psi_{I}\corg{(\bz)})
&=&
\nabla \cdot \partial_{s} G_{R} \nabla \Psi_{R}\corg{(\bz)} 
- \nabla \cdot \partial_{s} G_{I} \nabla \Psi_{I}\corg{(\bz)} \nonumber \\ 
&+& \partial_{s} \tilde f_{R}\corg{(\bz)}, \nonumber \\
- (\nabla \cdot G_{I} \nabla \partial_{s} \Psi_{R}\corg{(\bz)} 
+ \nabla \cdot G_{R} \nabla \partial_{s} \Psi_{I}\corg{(\bz)})
&=&
\nabla \cdot \partial_{s} G_{I} \nabla \Psi_{R}\corg{(\bz)} 
+ \nabla \cdot \partial_{s} G_{R} \nabla \Psi_{I}\corg{(\bz)} \nonumber \\
&+& \partial_{s} \tilde f_{I}\corg{(\bz)}, \nonumber \\
- (\nabla \cdot G_{R} \nabla \partial_{w} \Psi_{R}\corg{(\bz)} 
- \nabla \cdot G_{I} \nabla \partial_{w} \Psi_{I}\corg{(\bz)})
&=&
\nabla \cdot \partial_{w} G_{R} \nabla \Psi_{R}\corg{(\bz)} 
- \nabla \cdot \partial_{w} G_{I} \nabla \Psi_{I}\corg{(\bz)} \nonumber \\
&+& \partial_{w} \tilde f_{R}\corg{(\bz)}, \nonumber \\
- (\nabla \cdot G_{I} \nabla \partial_{w} \Psi_{R}\corg{(\bz)} 
+ \nabla \cdot G_{R} \nabla \partial_{w} \Psi_{I}\corg{(\bz)})
&=&
\nabla \cdot \partial_{w} G_{I} \nabla \Psi_{R}\corg{(\bz)} 
+ \nabla \cdot \partial_{w} G_{R} \nabla \Psi_{I}\corg{(\bz)} \nonumber \\
&+& \partial_{w} \tilde f_{I}\corm{(\bz).} 
\label{analyticity:eqn7} 
\end{eqnarray}
\corg{Note that to avoid clutter in the equations we refer to
  $G_{R}(\bz)$ as $G_{R}$ and $G_{I}(\bz)$ as $G_{I}$.  By the
  Lax-Milgram theorem the derivatives $\partial_{s} \Psi\corg{(\bz)}$
  and $\partial_{w} \Psi\corg{(\bz)}$ exist and have a unique solution
  whenever $\bz \in \Theta_{\beta}$.}  The second step is now to show
that the Cauchy-Riemann conditions are satisfied.

\corg{Let $P(\bz) := \partial_{s} \Psi_{R}(\bz) - \partial_{w}
  \Psi_{I}(\bz)$ and $Q(\bz) := \partial_{w} \Psi_{R}(\bz) +
  \partial_{s} \Psi_{I}(\bz)$. To show analyticity we have to show
  that $P(\bz) = 0$ and $Q(\bz) = 0$ for all $\bz \in
  \Theta_{\beta}$.} By taking linear combinations of equations
\eqref{analyticity:eqn7} we obtain
\begin{eqnarray}
-\nabla \cdot (G_{R} \nabla P - G_{I} \nabla Q) &=& \nabla \cdot (
(\partial_{s} G_{R} - \partial_{w} G_{I}) \nabla \Psi_{R} -
(\partial_{w} G_{R} + \partial_{s} G_{I})\nabla \Psi_{I} )
\nonumber \\
&+& \partial_{s} \tilde f_{R} - \partial_{w} \tilde f_{I}, \nonumber \\
-\nabla \cdot (G_{I} \nabla P + G_{R} \nabla Q) &=& \nabla \cdot (
(\partial_{w} G_{R} + \partial_{s} G_{I}) \nabla \Psi_{R} -
(\partial_{s} G_{R} - \partial_{w} G_{I})\nabla \Psi_{I} ) \nonumber \\
&+& \partial_{s} \tilde f_{I} + \partial_{w} \corm{\tilde f_{R}.}
\label{analytic:reimann-cauchy}
\end{eqnarray}
\cor{We now need to show that $G\corg{(\bz)}$ and $\tilde
  f\corg{(\bz)}$ satisfy the Riemann-Cauchy conditions so that the
  right hand side becomes zero.}

From Assumption \ref{analyticity:assumption1} we have that \corred{$(f
  \circ F)(\bz)$ is analytic on $\Theta_{\beta}$} thus $\tilde
l(\bz;v)$ is holomorphic on $\Theta_{\beta}$.  Now, recall that
$G(\bz)$ is analytic if $\bz \in \Theta_{\beta}$.  Thus equations
\eqref{analytic:reimann-cauchy} have a unique solution \corg{$P(\bz) =
  Q(\bz) = 0$ for all $\bz \in \Theta_{\beta}$}


\corr{From Hartog's Theorem it follows that $\Psi({\bf z})$ is
  continuous on $\Theta_{\beta}$. From Osgood's Lemma it follows that
  $\Psi({\bf z})$ is holomorphic on $\Theta_{\beta}$.}
\end{pf}

\begin{corl}
\corg{The following estimate holds for all \corred{$\bz \in
    \Theta_{\beta}$:}
\begin{equation}
\| \nabla ((\tilde u \circ F)(\cdot,{\bf z})) \|_{L^{2}(U)} 
\leq 
\frac{E((f \circ F)(\bz), \corred{\hat \bw}, a_{max}, \tilde \delta,
\alpha, d, C_P(U))}
{\varepsilon(\tilde{\delta},\beta,d,a_{max},a_{min})},
\label{analyticity:eqn8}
\end{equation}
where
\[
\begin{split}
E((f \circ F)(\bz), \corred{\hat \bw}, a_{max}, \tilde \delta,
\alpha, d, C_P(U)) 
&:= 
(2 - \alpha) \Big( \frac{ 
\| \nabla  \corred{\hat \bw 
} \|_{L^2(U)}}{ a^{-1}_{max} (2 - \tilde \delta)^{-d} \tilde
  \delta^2}
\\
&+
C_{P}(U) \| (f \circ F)(\bz) \|_{L^2(U)} 
 \Big)
\end{split}
\]
and $\varepsilon(\tilde{\delta},\beta,d,a_{max},a_{min})$ is defined
in Lemmas \ref{analyticity:lemma2} and \ref{analyticity:lemma3}.}
\end{corl}
\begin{pf}
We formally \corred{multiply} \eqref{analyticity:eqn2} by $\bar
  \Psi({\bf z})^{T}$ and integrate over $U$ to obtain
\[
\int_{\corr{U}} (\nabla \Psi_{R}({\bf z}))^T G_{R}({\bf z}) \nabla
\Psi_{R}({\bf z}) + (\nabla \Psi_{I}({\bf z}))^T G_{R}({\bf z}) \nabla
\Psi_{I}({\bf z}) = 
\int_U \Psi_{R}({\bf z}) \tilde f_{R}(\bz) +
\Psi_{I}({\bf z}) \tilde f_{I}(\bz).
\]
From Lemma \ref{analyticity:lemma0}, Lemma \ref{analyticity:lemma1}
and the Poincar\'{e} inequality we have that
\[
\begin{split}
|\int_U \Psi_{R}(\bz) \tilde f_{R}(\bz) 
+
\Psi_{I}(\bz) \tilde f_{I}(\bz) | 
&\leq
|\int_U \Real \{ \Psi(\bz)^{*} \tilde f(\bz) \} |
\leq
\int_U | (\nabla \corred{\hat \bw})^T G(\bz)\nabla  \Psi(\bz)^{*} | \\
&+
\int_U | \Psi(\bz)^{*} (f \circ F)(\bz) | \partial F(\bz) || \\
&\leq \frac{ \| \nabla \corred{\hat \bw} \|_{L^2(U)} 
\| \nabla  \Psi(\bz) \|_{L^2(U)}}{ 
a^{-1}_{max} (2 - \tilde \delta)^{-d} (2 - \alpha)^{-1}  \tilde \delta^2} \\
&+
(2 - \tilde \delta)C_{P}(U)
\| \nabla  \Psi(\bz) \|_{L^2(U)}
\| (f \circ F)(\bz) \|_{L^2(U)}.
\end{split}
\]
It follows that
\[
\| \nabla \Psi({\bf z}) \|_{L^{2}(U)}
\min_{x \in U}
\corred{\{\lambda_{min}(G_{R}({\bf z}))\}}  
\leq E((f \circ F)(\bz), \corred{\hat \bw}, 
a_{max}, \tilde \delta, \alpha, d).
\]
From Lemma \ref{analyticity:lemma3} the result follows.
\end{pf}
\begin{remark} \corred{We can relax the restrictions  on
$f$ from Assumption \ref{analyticity:assumption2} for Theorem
    \ref{analyticity:theorem1}. For example, suppose that the
    $N_{\bq}$ valued random vector $\bq$ takes values on $
    \Gamma_{\bq}:= \tilde \Gamma_1 \times \dots \times \tilde
    \Gamma_{N_{q}}$ (with the probability density $\tilde
    \rho(\bq)$). Assume that the random vector $\bq$ is independent
    from $\by$ and write $f$ as}
\[
\corm{f(\cdot,\bq) = 
\sum_{j=1}^{N_f} \tilde{y}_{j}(\bq)
\tilde b_{j}\corred{(\cdot)},}
\]
where for $j = 1, \dots, N_f$, $\tilde{y}_{j} \in L^{\infty}_{\tilde
  \rho}(\Gamma_{\bq})$ and \corm{$\tilde b_j:\R^{d} \rightarrow
  \R$}. We now have stochastic contributions from the coefficients
$\tilde{y}_{j}(\bq)$. Since $\tilde b_j$ is defined on $\R^{d}$ we can
remap onto the reference domain and obtain
\[
\corg{(f \circ F)(\cdot,\by,\bq) = \sum_{j=1}^{N_f} \tilde{y}_{j}(\bq)
  (\tilde b_{j} \circ F)(\cdot,\by).}
\]
\corm{Now, assume that for $j = 1, \dots, N_f$, $\tilde b_{j}$ can be
  analytically extended in $\C^{d}$ . Furthermore, for $j = 1, \dots,
  N_f$ the extensions $\Real (\tilde b_j \circ F)(\cdot,\bz), \Imag
  (\tilde b_j \circ F)(\cdot,\bz) \in L^{2}(U)$ $\forall \bz \in
  \Theta_{\beta}$.}  This implies that we can holomorphically extend
$(\tilde b_{j} \circ F)(\by)$ into \corred{$\Theta_{\beta}$}. Now,
suppose that the coefficients $\tilde{y}_{j}(\bq)$ can be analytically
extended into $\C^{N_{\bq}}$.  It follows that $(f \circ
F)(\cdot,\by,\bq)$ can be analytically extended in
\corred{$\Theta_{\beta} \times \C^{N_{\bq}}$}.

\corred{Using a similar proof strategy (and linearity of the elliptic
operator) as in Theorem \ref{analyticity:theorem1}, we can show that
$(\tilde{u} \circ F)(\cdot,\by,\bq)$ can be analytically extended
along each separate dimension in $\Theta_{\beta} \times \C^{N_{\bq}}$.
By using Hartog's Theorem and Osgood's Lemma we can conclude that
$(\tilde{u} \circ F)(\cdot,\by,\bq)$ can be analytically extended in
$\Theta_{\beta} \times \C^{N_{\bq}}$}.
\label{analyticity:remark1}
\end{remark}

\section{Stochastic Collocation}
\label{stochasticcollocation}

We seek to efficiently approximate the mean and variance of the QoI of
the form \eqref{setup:qoi}. More specifically we seek a numerical
approximation to the exact moments of the QoI in a finite dimensional
subspace $ V_{{ p},h}$ based on a tensor product structure, where the
following hold:
\begin{itemize}
\item ${ H_h(U)}\subset H_0^1(U)$ is a standard finite element space of
  dimension $N_h$, which contains continuous piecewise polynomials
  defined on regular triangulations $\mathcal{T}_h$ that have a maximum mesh
  spacing parameter $h>0$.
\item ${ \mcP_{ p}(\Gamma)} \subset L^2_\rho(\Gamma)$ is the span of
  tensor product polynomials of degree at most $ {p =
    (p_1,\ldots,p_{N})}$; i.e., $ \mcP_{ p}(\Gamma) =
  \bigotimes_{n=1}^{N}\;\mcP_{ p_n}(\Gamma_{n})$ with
\begin{equation*}
\mcP_{ p_n}(\Gamma_{n})=\text{\rm span}(y_n^m,\,m=0,\dots,p_n),
\quad n=1,\dots,N.
\end{equation*}
Hence the dimension of $\mcP_p$ is $N_p=\prod_{n=1}^N (p_n+1)$.
\item 
$u_h : \Gamma \rightarrow H_h(U)$ is the semidiscrete approximation
  that is obtained by projecting the solution of \eqref{setup:Prob3}
  onto the subspace $H_h(U)$, for each $\by\in\Gamma$, i.e.,

\begin{equation}
\begin{split}
\int_U (a \circ F)(\cdot, \by)[\Grad u_h(\by)]^{T} C^{-1}(\by)
  \Grad v_h |\partial F|(\by)|
 \,dx 
&= \int_U (f \circ F)(\cdot,\by)
  v_h |\partial F|(\by)| \,dx \\
&- L(\hat \bw,v_h)
\end{split}
\label{collocation-perturbation:eq1}
\end{equation}

\corg{$\forall v_h\in H_h(U)$ and for a.e.  $\by\in\Gamma$.}  Denote
$\pi_{h}:H^{1}_{0}(U) \rightarrow H_{h}(U)$ as the finite element
operator s.t. if $u \in H^{1}_{0}(U)$ then $u_h := \pi_h u$ and
\begin{equation}
\| u - \pi_h u \|_{H^{1}_{0}(U)} \leq C_{\pi} \min_{v \in H_{h}(U)}
\| u - v \|_{H^{1}_{0}(U)} \leq h^{r}C(r,u).
\label{collocation:perturbation:finiteelement}
\end{equation}
\noindent The constant $r \in \mathbb{N}$ will depend on the
regularity of $u$ and the polynomial order of the finite element
space $H_{h}$.  Denote $C_{\Gamma}(r) : = \sup_{\by
  \in \Gamma} C(r,u(\by))$.

\item Similarly, $\varphi_{h} :=\pi_{h} \varphi$ is the semi-discrete
  approximation of the influence function. For each $\by\in\Gamma$,
  i.e.,
\begin{equation}
\corg{\int_U (a \circ F)(\cdot,\by)[\Grad v_h(\by)]^{T} C^{-1}(\by) \Grad
\varphi_h\,dx = Q(v_h) \quad \forall v_h\in H_h(U).}
\label{collocation-perturbation:eq2}
\end{equation}
\end{itemize}
\begin{remark} Note that for the sake of simplicity we ignore
  quadrature errors and assume that the integrals
  \eqref{collocation-perturbation:eq1} and
  \eqref{collocation-perturbation:eq2} are computed exactly.
\end{remark}
The next step consists in collocating $Q_{h}(u_h(\by))$ with
respect to $\Gamma$.  To this end, we first introduce an auxiliary
probability density function $\hat\rho:\Gamma\rightarrow
\rset^+$ that can be seen as the joint probability of $N$
independent random variables; i.e., it factorizes as
\begin{equation}\label{eq:rhohat}
  \hat\rho(\by) = \prod_{n=1}^{N} \hat\rho_{n}(y_n) \;\; \forall \by\in\Gamma,
  \qquad \text{and is such that }\;\; \left\|\frac{\rho}{\hat
      \rho}\right\|_{L^\infty(\Gamma)}<\infty.
\end{equation}
For each dimension $n=1,\ldots,N$, let $y_{n,k_n}$, $1\leq k_n\leq
p_n+1$, be the $p_n+1$ roots of the orthogonal polynomial $q_{p_n+1}$
with respect to the weight $\hat\rho_{n}$, which then satisfies $
\int_{\Gamma_{n}} q_{p_n+1}(\by) v(\by)
\hat{\rho}_{n}(\by) dy = 0$ for all $v\in
\mcP_{p_n}(\Gamma_{n}).$

Standard choices for $\hat \rho$, such as constant, Gaussian, etc.,
lead to \corr{the well-known} roots of the polynomial $q_{p_n+1}$,
which are tabulated to full accuracy and do not need to be computed.
Note, that for the case of Clenshaw-Curtis abscissas the
  collocation points are chosen as extrema of Chebyshev polynomials.

To any vector of indexes $[k_1, \ldots, k_{N}]$ we associate the global
index
\[
  k=k_1+p_1(k_2-1)+ p_1p_2(k_3-1) +\cdots
\]
and we denote by $y_k$ the point $y_k=[y_{1,k_1}, y_{2,k_2},
\ldots,y_{N,k_{N}}]\in\Gamma$.  We also introduce, for each
$n=1,2,\ldots,N$, the Lagrange basis $\{l_{n,j}\}_{j=1}^{p_n+1}$ of
the space~$\mcP_{p_n}$,
\[
  l_{n,j}\in\mcP_{p_n}(\Gamma_{n}), \qquad
  l_{n,j}(y_{n,k}) = \tilde{\delta}_{jk}, \quad j,k=1,\ldots,p_n+1,
\]
where $\tilde{\delta}_{jk}$ is the Kronecker symbol, and we set $l_k(\by) =
\prod_{n=1}^{N} l_{n,k_n}(y_n)$. Now, let $\mcI_p: C^0(\Gamma)$
$\rightarrow$ $\mcP_p(\Gamma)$, such that
\[
  \mcI_p v(\by) = \sum_{k=1}^{N_p} v(y_k) l_k(\by) \qquad \forall v\in
  C^0(\Gamma).
\]
\noindent Thus for any ${\bf y} \in \Gamma$ we can write the Lagrange
approximation of the QoI ($Q_{h}({\bf y})$):
\[
Q_{h,p}({\bf y}) := {\cal I}_{p} B({\bf y}; u_h({\bf y}),
\varphi_h({\bf y})).
\]
\begin{remark}
For any continuous function $g:\Gamma \rightarrow \rset$ we introduce
the Gauss quadrature formula $\mathbb{E}_{\hat\rho}^p[g]$
approximating the integral $\int_{\Gamma} g({\bf y})\hat\rho(\by)\,
d\by$ as
\begin{equation}
   \mathbb{E}_{\hat\rho}^p[g] = \sum_{k=1}^{N_p} \omega_k g(y_k), \quad
   \omega_k = \prod_{n=1}^N \omega_{k_n}, \quad \omega_{k_n} =
   \int_{\Gamma_{n}} l_{k_n}^2(y)\hat\rho_n(y)\,dy.
\label{eq:gauss}
\end{equation}
In the case $\rho/\hat\rho$ is a smooth function we can use directly
\eqref{eq:gauss} to approximate the mean value or the variance of
$Q_{h}$ as
\begin{align*}
   & \mathbb{E}_h[Q_h] :=
   \mathbb{E}_{\hat\rho}^p\left[\frac{\rho}{\hat\rho}
     Q_{h,p} \right], \,\mbox{and}\,\, \var_h(Q_h):=
   \mathbb{E}_{\hat\rho}^p\left[\frac{\rho}{\hat\rho} Q^2_{h,p} \right]
  -
      \mathbb{E}_{\hat\rho}^p \left[ \frac{\rho}{\hat\rho} 
 Q_{h,p} \right]^{2}.
 \end{align*}
Otherwise, $\mathbb{E}[Q_h]$ and $\var_h(Q_h)$ should be computed with
a suitable quadrature formula that takes into account eventual
discontinuities or singularities of $\rho/\hat\rho$. However, to
simplify the error analysis presentation in Section
\ref{erroranalysis}, we shall assume that the quadrature scheme for
the expectation to be exact.
\label{stochasticcollocation:remark1}
\end{remark}

\subsection{Sparse Grid Approximation}
\label{sparsegrid}

Recall that the dimension of $\mcP_p$ increases as $\prod_{n=1}^N
(p_n+1)$.  This has the consequence that even for a relatively small
dimension $N$ the accurate computation of the mean and variance of
the QoI with a tensor product grid becomes intractable. However, if
the stochastic integral is highly regular with respect to the random
variables, the application of Smolyak sparse grids is well suited.  We
present here a generalization of the classical Smolyak construction
(see e.g.  \cite{Smolyak63,Novak_Ritter_00}) to build a multivariate
polynomial approximation on a sparse grid. See \cite{Back2011} for
details.

Let $\mcI_n^{m(i)}:C^0(\Gamma_{n})\rightarrow {\cal
  P}_{m(i)-1}(\Gamma_{n})$ be the 1D interpolant as previously
introduced. Here $i\geq 1$ denotes the level of approximation and
$m(i)$ the number of collocation points used to build the
interpolation at level $i$, with the requirement that $m(1)=1$ and
$m(i) < m(i+1)$ for $i\geq 1$.  In addition, let $m(0) =0$ and
$\mcI_n^{m(0)}=0$.
Further, we introduce the difference operators
\[
  \Delta_n^{m(i)} := \mcI_n^{m(i)}-\mcI_n^{m(i-1)}.
\]
Given an integer $w \geq 0$ called the approximation level and a
multi-index $\ii=(i_1,\ldots,i_{N})$ $\in \Nset^{N}_+$,
we introduce a function $g:\Nset^{N}_+\rightarrow\Nset$ strictly
increasing in each argument and define a sparse grid
approximation of $Q_h$
\begin{equation}
  \mcS^{m,g}_w[Q_h] 
= \sum_{\ii\in\Nset^{N}_+: g(\ii)\leq w} \;\;
 \bigotimes_{n=1}^{N} \Delta_n^{m(i_n)}(Q_h) 
\end{equation}
\noindent or equivalently written as
\begin{equation}
  \mcS^{m,g}_w[Q_h]
  = \sum_{\ii\in\Nset^{N}_+: g(\ii)\leq w} \;c(\ii)\; 
  \bigotimes_{n=1}^{N} \mcI_n^{m(i_n)}(Q_h), \qquad \text{with } c(\ii) 
  = \sum_{\stackrel{\jj \in \{0,1\}^{N}:}{g(\ii+\jj)\leq w}} (-1)^{|\jj|}.
\label{sparsegrid:eqn1}
\end{equation}

From the previous expression, we see that the sparse grid
approximation is obtained as a linear combination of full tensor
product interpolations.  However, the constraint $g(\ii)\leq w$ in
\eqref{sparsegrid:eqn1} is typically chosen so as to forbid the use of
tensor grids of high degree in all directions at the same time.

Let $\mm(\ii) = (m(i_1),\ldots,m(i_{N}))$ and consider the set of
polynomial multi-degrees
\[
\Lambda^{m,g}(w) = \{\pp\in\Nset^{N}, \;\;  g(\mm^{-1}(\pp+\oone))\leq w\}. 
\]
Denote by $\Pol_{\Lambda^{m,g}(w)}(\Gamma)$ the corresponding
multivariate polynomial space spanned by the monomials with
multi-degree in $\Lambda^{m,g}(w)$, i.e.
\[
\Pol_{\Lambda^{m,g}(w)}(\Gamma) = span\left\{\prod_{n=1}^{N} y_n^{p_n},
  \;\; \text{with } \pp\in\Lambda^{m,g}(w)\right\}.
\]

The following result proved in \cite{Back2011},
states that the sparse approximation formula $\mcS^{m,g}_w$ is exact
in $\Pol_{\Lambda^{m,g}(w)}(\Gamma)$:
\begin{prop}\label{prop1}
\
\begin{itemize} 
\item[a)] For any $f\in C^0(\Gamma;V)$, we have $\mcS_w^{m,g}[f]\in \Pol_{\Lambda^{m,g}(w)}\otimes V$.
\item[b)] Moreover, $\mcS_w^{m,g}[v] = v, \;\; \forall v\in\Pol_{\Lambda^{m,g}(w)}\otimes V$.
\end{itemize}
\end{prop}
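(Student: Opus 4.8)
The plan is to reduce both assertions to the case of a single vector-valued monomial by linearity, and then to exploit two structural facts about the one-dimensional operators $\mcI_n^{m(i)}$: that each is a projection reproducing all polynomials of degree at most $m(i)-1$, and that the difference operators $\Delta_n^{m(i)} = \mcI_n^{m(i)} - \mcI_n^{m(i-1)}$ telescope. Throughout I read $\mm^{-1}(\pp+\oone)$ componentwise as $\bigl(\mm^{-1}(\pp+\oone)\bigr)_n = \min\{i : m(i) \geq p_n+1\}$, which is exactly the smallest level at which the $n$-th interpolant reproduces $y_n^{p_n}$, since $\mcI_n^{m(i)}$ reproduces $\mcP_{m(i)-1}(\Gamma_n)$ and so requires $m(i)-1 \geq p_n$. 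I will write $\mathbf{L} := \mm^{-1}(\pp+\oone)$, with components $L_n$.

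For part (a), I would note that the image of $\Delta_n^{m(i_n)}$ lies in $\mcP_{m(i_n)-1}(\Gamma_n)$, so each summand $\bigotimes_{n=1}^{N} \Delta_n^{m(i_n)}(f)$ with $g(\ii)\leq w$ lies in $\bigl(\bigotimes_{n=1}^{N} \mcP_{m(i_n)-1}(\Gamma_n)\bigr)\otimes V$. Any monomial $\by^{\pp}$ occurring there has $p_n \leq m(i_n)-1$, i.e. $m(i_n)\geq p_n+1$, hence $L_n \leq i_n$ for every $n$. Since $g$ is strictly increasing in each argument, $g(\mm^{-1}(\pp+\oone)) \leq g(\ii) \leq w$, which is precisely the condition $\pp \in \Lambda^{m,g}(w)$. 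Summing over the finitely many admissible $\ii$ then gives $\mcS_w^{m,g}[f] \in \Pol_{\Lambda^{m,g}(w)}\otimes V$.

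For part (b), by linearity it suffices to treat $v(\by) = \by^{\pp}\otimes \phi$ with $\pp\in\Lambda^{m,g}(w)$ and $\phi\in V$; the factor $\phi$ passes inertly through every operator, so I may argue with the scalar monomial $\by^{\pp}$. The key observation is that $\Delta_n^{m(i)}[y_n^{p_n}] = 0$ as soon as $m(i-1)\geq p_n+1$, because then both $\mcI_n^{m(i)}$ and $\mcI_n^{m(i-1)}$ reproduce $y_n^{p_n}$; equivalently, $\Delta_n^{m(i)}[y_n^{p_n}]\neq 0$ forces $i\leq L_n$. Thus the only possibly nonzero summands $\bigotimes_{n}\Delta_n^{m(i_n)}[\by^{\pp}]$ have $\ii \leq \mathbf{L}$ componentwise. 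By monotonicity of $g$ all such $\ii$ satisfy $g(\ii)\leq g(\mathbf{L})\leq w$, so they are retained in the Smolyak sum; conversely, any retained $\ii$ with $\ii\not\leq\mathbf{L}$ contributes zero. Hence the restricted sum coincides with the full telescoping sum $\sum_{\ii\leq\mathbf{L}}\bigotimes_{n}\Delta_n^{m(i_n)} = \bigotimes_{n}\mcI_n^{m(L_n)}$ (using $\mcI_n^{m(0)}=0$), and since each $\mcI_n^{m(L_n)}$ reproduces $y_n^{p_n}$ by the defining property of $L_n$, we obtain $\mcS_w^{m,g}[\by^{\pp}] = \bigotimes_{n}\mcI_n^{m(L_n)}[\by^{\pp}] = \by^{\pp}$.

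The main obstacle is the bookkeeping that links the three encodings of resolution, namely the degree $p_n$, the level $i_n$, and the point count $m(i_n)$, all mediated through the nonstandard inverse $\mm^{-1}$ and the monotonicity of $g$. Once the equivalence $m(i)\geq p+1 \iff i\geq \bigl(\mm^{-1}(\pp+\oone)\bigr)_n$ and the annihilation property of $\Delta_n^{m(i)}$ are pinned down, the remainder is the classical telescoping argument; the tensorization with $V$ and the passage from monomials to arbitrary elements of $\Pol_{\Lambda^{m,g}(w)}\otimes V$ are then routine by linearity and continuity.
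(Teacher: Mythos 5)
Your proof is correct. Note that the paper itself does not prove Proposition \ref{prop1} at all --- it simply cites \cite{Back2011} --- and your argument (each $\Delta_n^{m(i_n)}$ maps into $\mcP_{m(i_n)-1}(\Gamma_n)$ for part (a); the annihilation property $\Delta_n^{m(i)}[y_n^{p_n}]=0$ for $m(i-1)\geq p_n+1$ combined with monotonicity of $g$ and the telescoping identity $\sum_{i\leq L_n}\Delta_n^{m(i)}=\mcI_n^{m(L_n)}$ for part (b)) is precisely the standard proof given in that reference, so there is nothing to object to.
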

\noindent Here $V$ denotes a Banach space defined on $U$ and
\[
C^{0}(\Gamma; V) : = \{ v: \Gamma \rightarrow V\,\, \mbox{is
  continuous on $\Gamma$ and } \max_{y\in \Gamma} \|v(y)\|_{V} < \infty \}.
\]

We recall that the most typical choice of $m$ and $g$ is given by (see
\cite{Smolyak63,Novak_Ritter_00})
\[
m(i) = \begin{cases} 1, & \text{for } i=1 \\ 2^{i-1}+1, & \text{for }
  i>1 \end{cases}\quad \text{ and } \quad g(\ii) = \sum_{n=1}^N
(i_n-1).
\]
This choice of $m$, combined with the choice of Clenshaw-Curtis
interpolation points (extrema of Chebyshev polynomials) leads to
nested sequences of one dimensional interpolation formulas and a
sparse grid with a highly reduced number of points compared to the
corresponding tensor grid.  In Table \ref{sparsegrid:table1} different
choices of $g(\ii)$ are given (see \cite{Back2011}).
\begin{table}[!htbp]
  \begin{tabular}{||c|c|c||}
    \hline\hline
    Approx. space 			&	sparse grid:	 \;\; $m$, $g$				&		polynomial space: \;\; $\Lambda(w)$ \\
    \hline\hline
    Tensor Product 		&	$m(i)=i$ 						&	$\{\pp\in\Nset^{N}: \;\; \max_n p_n \leq w\}$ 	\\
    Product (TP)&	$g(\ii) = \max_n(i_n-1) \leq w$	&											\\	
    \hline
    Total 		    &	$m(i)=i$ 						&	$\{\pp\in\Nset^{N}: \;\; \sum_n p_n \leq w\}$ 	\\
    Degree (TD)   &	$g(\ii) = \sum_n(i_n-1) \leq w$	&											\\
    \hline
    Hyperbolic 	&	$m(i)=i$ 						&	$\{\pp\in\Nset^{N}: \;\; \prod_n(p_n+1) \leq w+1\}$ 	\\
    Cross (HC)&	$g(\ii) = \prod_n(i_n) \leq w+1$	&											\\	
    \hline
    Smolyak (SM) 			&	$m(i)=	\begin{cases} 
      2^{i-1}+1, \, i>1 \\
      1, \, i=1  
    \end{cases}$
    &	$\{\pp\in\Nset^{N}: \;\; \sum_n f(p_n) \leq w\}$ 	\\
    & $g(\ii) = \sum_n(i_n-1) \leq w$ & $ f(p) = \begin{cases}
      0, \; p=0 \\
      1, \; p=1 \\
      \lceil \log_2(p) \rceil, \; p\geq 2
    \end{cases}$				\\
    \hline\hline

	\end{tabular} 
	\caption{Sparse approximation formulas and corresponding set
          of polynomial multi-degrees used for approximation.}
	\label{sparsegrid:table1}
\end{table} 

It is also straightforward to build related anisotropic sparse
approximation formulas by making the function $g$ to act differently
on the input random variables $y_n$. Anisotropic sparse stochastic
collocation \cite{nobile2008b} combines the advantages of isotropic
sparse collocation with those of anisotropic full tensor product
collocation. \cor{Note that in \cite{Chkifa2014}, the authors show
  convergence of sparse grid approximations for en elliptic PDE with
  random coefficients with infinite dimensions i.e. $N = \infty$.}

The mean term $\mathbb{E}[Q_h]$ is approximated as
\begin{equation}
\mathbb{E}[\mcS^{m,g}_{\lv} Q_{h}] = 
\mathbb{E}_{\hat{\rho}}[\mcS^{m,g}_{\lv} Q_{h} \frac{\rho}{\hat{\rho}}],
\label{sparsegrid:eqn2} 
\end{equation}
where $v \in L^{1}_{\rho}(\Gamma)$
\[
\mathbb{E}_{\hat{\rho}}[v] := \int_{\Gamma} v \hat{\rho}(\by)\,\,d\by
\]
and similarly the variance $\var[Q]$ is approximated as
\begin{equation}
\begin{split}
\var_{h}[Q_{h}] 
& = \mathbb{E}[ (\mcS^{m,g}_{\lv} [Q_{h}])^2 ] 
- \mathbb{E}[\mcS^{m,g}_{\lv} [Q_{h}]]^{2} =
\mathbb{E}_{\hat{\rho}}[ (\mcS^{m,g}_{\lv} [Q_{h}])^2 \frac{\rho}{\hat{\rho}}] 
- \mathbb{E}_{\hat{\rho}}[\mcS^{m,g}_{\lv} [Q_{h}] \frac{\rho}{\hat{\rho}}]^{2}.\\
\end{split}
\label{sparsegrid:eqn3}  
\end{equation}

\section{Error Analysis}
\label{erroranalysis}

In this section we derive error estimates of the mean and variance
with respect to (i) the finite element approximation, (ii) the sparse
grid approximation and (iii) truncating the stochastic model to the
first $N_{s}$ dimensions, \corred{again under Assumption
  \ref{analyticity:assumption1} that $a \circ F$ and $\hat g$ do not
  depend on $\omega \in \Omega$, and $f$ is deterministic.}

For notational simplicity we split the Jacobian as follows
\begin{equation}
\partial F(x,\omega) = I + \sum_{l=1}^{N_s} B_{l}(x)
\sqrt{\mu_{l}} Y_{l}(\omega) + \sum_{l=N_s+1}^{N} B_{l}(x)
\sqrt{\mu_{l}} Y_{l}(\omega).
\label{erroranalysis:eqn1}
\end{equation}
Furthermore, let $\Gamma_s := [-1,1]^{N_s}$, $\Gamma_f :=
[-1,1]^{N-N_s}$, then the domain $\Gamma = \Gamma_s \times \Gamma_f$.
We now refer to $Q(\by_s)$ as $Q(\by)$ restricted to the stochastic
domain $\Gamma_{s}$ (i.e. $\mu_l = 0$ for $l = N_s + 1, \dots,
  N$ in eqn \eqref{erroranalysis:eqn1}). A similar notation is used
  for the solution $u(\by_s)$ as the restriction to $\Gamma_s$ of
  $u(\by)$, as well as for $G(\by_s)$. It is clear also that
$Q(\by_s,\by_f)=Q(\by)$ and $G(\by_s,\by_f)=G(\by)$ for all $\by \in
\Gamma_s \times \Gamma_f$, $\by_s \in \Gamma_s$, and $\by_f \in
\Gamma_f$.


Now that we have established notation, we are interested in deriving
estimates for the variance ( $| var[Q({\bf y}_{s},{\bf y}_{f})] -
var[\mcS^{m,g}_w
[Q_{h}({\bf y}_{s})]] | $ ) and mean ($| \eset{Q({\bf
    y}_{s},{\bf y}_{f})}$ $- \eset{\mcS^{m,g}_w[Q_{h}({\bf y}_{s})]}
|$) errors.  First observe that
\[
\begin{split}
|var[Q({\bf y}_{s},{\bf y}_{f})] -
var[\mcS^{m,g}_w[Q_{h}({\bf y}_{s})]] | \leq
& |var[Q({\bf y}_{s},{\bf y}_{f}) ] - var[Q({\bf y}_{s}) ]| \\
& +  |var[Q({\bf y}_{s})] - var[Q_{h}({\bf y}_{s})]| \\ 
& + |var[Q_{h}({\bf y}_{s})] - var[\mcS^{m,g}_w[Q_{h}({\bf y}_{s})]]|.
\end{split}
\]
\noindent Let us analyze the first term. By applying the
Cauchy-Schwartz and Jensen's inequality we have that
\[
\eset{Q({\bf y}_{s},{\bf y}_{f})
^{2} - Q({\bf y}_{s})^{2}}
\leq \|Q({\bf y}_{s},{\bf y}_{f}) - Q({\bf y}_{s}) \|_{L^{2}_{\rho}(\Gamma)} 
\| Q({\bf y}_{s},{\bf y}_{f}) + Q({\bf y}_{s})\|_{L^{2}_{\rho}(\Gamma)},
\]
\corr{or
\[
\eset{Q({\bf y}_{s},{\bf y}_{f})
^{2} - Q({\bf y}_{s})^{2}}
\leq \|Q({\bf y}_{s},{\bf y}_{f}) - Q({\bf y}_{s}) \|_{L^{1}_{\rho}(\Gamma)} 
\| Q({\bf y}_{s},{\bf y}_{f}) + Q({\bf y}_{s})\|_{L^{\infty}_{\rho}(\Gamma)},
\]
}
and 
\[
\begin{split}
|\eset{Q({\bf y}_{s},{\bf y}_{f})
}^{2}
 - \eset{Q({\bf y}_{s})}^{2} |
& = 
|\eset{Q({\bf y}_{s},{\bf y}_{f}) - Q({\bf y}_{s})} \eset{Q({\bf y}_{s}, {\bf y}_f)
 + Q({\bf y}_{s})
} | \\
& \leq 
\|Q({\bf y}_{s},{\bf y}_{f})
 - Q({\bf y}_{s}) \|_{L^{1}_{\rho}(\Gamma)} 
\| Q({\bf y}_{s},{\bf y}_{f}) 
+ Q({\bf y}_{s})\|_{L^{1}_{\rho}(\Gamma)} \\
& \leq 
\|Q({\bf y}_{s},{\bf y}_{f})
 - Q({\bf y}_{s}) \|_{L^{2}_{\rho}(\Gamma)} 
\| Q({\bf y}_{s},{\bf y}_{f}) 
+ Q({\bf y}_{s})\|_{L^{2}_{\rho}(\Gamma)}.
\end{split}
\]
\noindent Therefore
\[
|var[Q({\bf y}_{s},{\bf y}_{f})] - var[Q({\bf y}_{s})]| \leq
C_{T}\|Q({\bf y}_{s},{\bf y}_{f}) - Q({\bf
  y}_{s})\|_{L^{2}_{\rho}(\Gamma)}
\]
\noindent for some positive constant $C_{T} \in
\mathbb{R}^{+}$. It is not hard to show that $|var[Q(\by_s,$
    $\by_f)] - var[\mcS^{m,g}_w[Q_{h}({\bf y}_{s})]] |$ and
  $|\eset{Q(\by_s,\by_f)} - \eset{\mcS^{m,g}_w[Q_{h}(\by_s)}]|$ are
  less or equal to
\[
C_{T} \underbrace{\| Q({\bf y}_{s},{\bf y}_{f})
 - Q({\bf y}_{s})
  \|_{L^{2}_{\rho}(\Gamma)}}_{\mbox{Truncation (I)}} +
\,\,\corr{C_{FE}\underbrace{\|Q({\bf y}_{s}) - Q_{h}({\bf y}_{s})
  \|_{L^{1}_{\rho}(\Gamma_s)}}_{\mbox{Finite Element (II)} }}
\]
\[
+ C_{SG}
\underbrace{
  \| Q_{h}({\bf y}_{s}) - \mcS^{m,g}_w[Q_{h}({\bf y}_{s})] 
\|_{L^{2}_{\rho}(\Gamma_s)}
}
  _{\mbox{Sparse Grid (III)}},
\]
for some positive constants $C_{T},C_{FE}$ and $C_{SG}$. We
now study the error contributions from (I), (II) and (III).

\subsection{Truncation Error (I)}
\label{errorestimates:truncation}
Given that $Q:H^{1}_{0}(U) \rightarrow \mathbb{R}$ is a bounded linear
functional then for any realization of $\varphi(\by_s,\by_f)$ we have
that
\[
\begin{split}
|Q({\bf y}_s,{\bf y}_f) - Q({\bf y}_s)|
& =
|B(\by_s,\by_f; \varphi(\by_s,\by_f), \tilde{u}({\bf y}_s,{\bf y}_f) - \tilde{u}({\bf
  y}_s))| \\
& \leq a_{max} \F^{d}_{max}\F^{-2}_{min} 
\|\varphi(\by_s,\by_f)\|_{H^{1}_{0}(U)} \| \tilde{u}({\bf y}_s,{\bf y}_f) - \tilde{u}({\bf
  y}_s) \|_{H^{1}_{0}(U)},
\end{split}
\]
\corg{where we have denoted by $\tilde u(\by_s) = (\tilde u \circ
  F)(\cdot,\by_s)$ and similarly $\tilde u(\by_s,\by_f) = (\tilde u
  \circ F)(\cdot,\by_s,\by_f)$.}
Following a similar argument as the proof from Lemma
\ref{setup:lemma1} we have that
\[
\|\varphi(\by_s,\by_f)\|_{H^{1}_{0}(U)} \leq 
\frac{\corg{\| q  \|_{L^{2}(\tilde D)}} C_{P}(U) \F^{d+2}_{max}  
}{a_{min} \F^{d}_{min}}
\]
a.s., where $q$ is defined in section \ref{setup:QoI}. Thus
\[
\|Q({\bf y}_s,{\bf y}_f) - Q({\bf y}_s)\|_{L^{2}_{\rho}(\Gamma)}
\leq C_{TR}
\| \tilde{u}({\bf y}_s,{\bf y}_f) - \tilde{u}({\bf
  y}_s) \|_{L^{2}(\Gamma; H^{1}_{0}(U))},
\]
where \corg{$C_{TR} := a_{max}a_{min}^{-1}
  \F^{2d+2}_{max}\F^{-d-2}_{min} \| q \|_{L^{2}(\tilde U)}
  C_{P}(U)$}. We now seek control on the error term $e:= \|
\tilde{u}({\bf y}_s,{\bf y}_f) - \tilde{u}({\bf y}_s)
\|_{L^{2}_{\rho}(\Gamma; H^{1}_{0}(U))}$.  First we establish some
notation and definitions.  From Section \ref{analyticity} we have
shown that the solution $\tilde{u} \circ F$ of Problem
\ref{setup:Prob3} varies continuously with respect to $ y \in
\Gamma$. More precisely, recall that if $V$ is a Banach space defined
on $U$ and
\[
C^{0}(\Gamma; V) : = \{ v: \Gamma \rightarrow V\,\, \mbox{is
   continuous on $\Gamma$ and } \max_{y\in \Gamma} \|v(y)\|_{V} < \infty 
 \},
\]
\noindent then $u \in C^{0}(\Gamma, H^{1}_{0}(U))$. Furthermore, let
\[
L^{2}_{\rho}(\Gamma;V) := \{v:\Gamma \rightarrow V\,\, \mbox{is strongly
  measurable and} \,\, \int_{\Gamma} \|v\|^{2}_{V}\,\rho(\by)\,d\by <
\infty \}.
\]
\corr{From Theorem \ref{analyticity:theorem1} we have that $\tilde{u} \circ F \in
C^{0}(\Gamma;H^{1}_{0}(U)) \subset L^{2}_{\rho}(\Gamma;H^{1}_{0}(U))$, thus
$\tilde{u} \circ F$ satisfies the following variational problem}
\[
{\cal A}(\tilde{u} \circ F,v) : = E[B({\bf y}_s,{\bf y}_f
; \tilde{u} \circ F,v)] = E[\tilde{l}(\by_s,\by_f;v)]\,\,\, \forall v \in
L^{2}_{\rho}(\Gamma;H^{1}_{0}(U)).
\]
The following lemma will be useful in deriving error estimates.
\begin{lem} For all $w,v \in L^{2}_{\rho}(\Gamma;H^{1}_{0}(U))$ we
have that
\[
|{\cal A}(w,v)| \leq a_{max} \F^{d}_{max}\F^{-2}_{min}
\|w\|_{L^{2}_{\rho}(\Gamma; H^{1}_{0}(U))} \|v\|_{L^{2}_{\rho}(\Gamma;
  H^{1}_{0}(U))}.
\]
\label{errorestimates:truncation:lemma1}
\end{lem}
\begin{pf}
\[
\begin{split}
|{\cal A}(w,v)| & \leq \sup_{\by \in \Gamma} \lambda_{max}(G(\by)) 
\mathbb{E} \left[\int_{U} \corg{| \nabla
  w| |\nabla v |} \right] \\ 
& \leq a_{max} \F^{d}_{max}\F^{-2}_{min}
 \mathbb{E}[\|\nabla w\|_{L^{2}(U)}
\|\nabla v\|_{L^{2}(U)}
] \\
& \leq a_{max} \F^{d}_{max}\F^{-2}_{min}
 \|w\|_{L^{2}_{\rho}(\Gamma; H^{1}_{0}(U))}
\|v\|_{L^{2}_{\rho}(\Gamma; H^{1}_{0}(U))}.
\\
\end{split}
\]
\end{pf}
We can now derive the truncation error (I).


\begin{thm}
 Let $\tilde{u}$ be the solution to the linear Problem
 \ref{setup:Prob3} that satisfies Assumptions \ref{setup:Assumption1},
 \ref{setup:Assumption2}, \ref{setup:Assumption3},
 \ref{setup:Assumption4} and \ref{analyticity:assumption1}.
 \corred{Furthermore, assume that $\|(f \circ
   F)(\by)\|_{W^{1,\infty}(U)}$ is \corred{bounded uniformly in
     $\Gamma$} then} 
\[
\begin{split}
  \|\tilde{u}({\bf y}_s,{\bf y}_f) - 
 \tilde{u}({\bf y}_s)\|_{L^{2}_{\rho}(\Gamma; H^{1}_{0}(U))} 
&\leq \C_{1} B_{\T} + \C_2 C_{\T},
\end{split}
\]
where $B_{\T} := \sup_{x \in U} \sum_{i = N_s+1}^{N} \sqrt{\mu_{i}}
\|B_{i}(x)\|$, $C_{\T} : = \sum_{i = N_s+1}^{N} \sqrt{\mu_{i}}
\|b_{i}(x)\|_{L^{\infty}(U)}$,
\[
\begin{split}
\C_1
& := {\cal C} 
\Big(C_{P}(U) \F_{max}^{d-1} \F_{min}^{-2}d
\|f \circ F(\by_s,\by_f) \|_{L^{2}(\Gamma;L^{2}(U))} \\
&+ 
\corred{\corm{a_{max}} H(\F_{max},\F_{min},\tilde{\delta},d) 
\| \hat \bw  \|_{L^{2}(\Gamma;H^{1}_0(U))}} \\
&+
\corred{a_{max} H(\F_{max},\F_{min},\tilde{\delta},d)
\| \tilde{u}(\by_s,\by_f) \|_{L^{2}_{\rho}(\Gamma; H^{1}_{0}(U))}}
\Big), \\
\C_2          
&:=
\corred{{\cal C} \| \hat v \|_{L^{\infty}(U)}
\F_{max}^{d} C_P(U) 
\| f \|_{L^{\infty}(\Gamma;W^{1,\infty}(U))}
\| \chi_{U} \|_{L^{2}(U)} }, \\
\end{split}
\]
\corred{${\cal G}:= \cup_{\omega \in \Omega} D(\omega)$}, ${\cal C} :=
\frac{C_{P}(U)^{2}}{a_{min}\F_{min}^{d} \F_{max}^{-2}}$, and
$H(\F_{max},\F_{min},\tilde{\delta},d):=
\F^{d-1}_{max}\F_{min}^{-3}(\F_{max}(2 + \F_{min}^{-1}(1 -
\tilde{\delta})) + \F^{-1}_{min}d)$.
\end{thm}
\begin{pf}
We follow a similar strategy as in \cite{Babuska2004,Frauenfelder2005}
to compute the bounds for the truncation of the stochastic variables
to $\Gamma_{s}$. Consider the solution to Problem \ref{setup:Prob3}
$\tilde{u}(\by_s) \in C^{0}(\Gamma_s;H^{1}_{0}(U)) \subset
L^{2}_{\rho}(\Gamma; H^{1}_{0}(U))$, where the matrix of coefficients
$G(\by_s)$ depends only on the variables $Y_{1}, \dots, Y_{N_{s}}$,
\[
\mathbb{E}[B({\bf y}_{s}; \tilde{u}(\by_s),v)] =
  \mathbb{E}[\tilde{l}(\by_s;v)]\,\,\, \forall v \in
  L^{2}_{\rho}(\Gamma_s; H^{1}_{0}(U)).
\]
Furthermore the variational form is still valid $\forall v \in
L^{2}_{\rho}(\Gamma; H^{1}_{0}(U))$ i.e.
\[
{\cal A}_{N_s}(\tilde{u}(\by_s),v)
 := 
\eset{B({\bf y}_{s}; \tilde{u}(\by_s),v)} = \eset{\tilde{l}(\by_s;v)}\,\,\, 
\forall v \in L^{2}_{\rho}(\Gamma; H^{1}_{0}(U)).
\]
Now, Observe that $\forall v \in
L^{2}_{\rho}(\Gamma;H^{1}_{0}(U))$ we have that
\[
\begin{split}
{\cal A}_{N_s}(v,v) & \geq \inf_{\by_s \in \Gamma_s} 
              \lambda_{min}(G(\by_s)) \mathbb{E}[\|\nabla v\|^{2}_{L^{2}(U)}]\\
            & \geq a_{min} \F^{d}_{min}\F^{-2}_{max} 
            \mathbb{E}[\|\nabla v\|^{2}_{L^{2}(U)}] \\
            & \geq a_{min} \F^{d}_{min}\F^{-2}_{max} C_P(U)^{-2}
         \corg{\|v\|^{2}_{L^{2}_{\rho}(\Gamma; H^{1}_{0}(U))}}.
\end{split}
\]
By adapting the proof from Strang's Lemma and applying Lemma
\ref{errorestimates:truncation:lemma1} we have that for all $v \in
L^{2}_{\rho}(\Gamma;H^{1}_{0}(U))$
\[
\begin{split}
  \| \tilde{u}(\by_s) - v \|^{2}_{L^{2}_{\rho}(\Gamma; H^{1}_{0}(U))}
  & \leq \corg{\cal C} (
 {\cal A}_{N_s}
(\tilde{u}(\by_s)  - v, \tilde{u}(\by_s)  - v) 
\pm {\cal A}(\corg{\tilde{u}(\by)} - v, \tilde{u}(\by_s) - v) ) \\
  & 
\leq \corg{\cal C} (a_{max} \F^{d}_{max}\F^{-2}_{min} 
 \| \corg{\tilde{u}(\by)} - v \|_{L^{2}_{\rho}(\Gamma; H^{1}_{0}(U))} 
\| \tilde{u}(\by_s) - v
  \|_{L^{2}_{\rho}(\Gamma; H^{1}_{0}(U))} \\
&+ 
| {\cal A}(v, \tilde{u}(\by_s) - v) -
   {\cal A}_{N_s}(v, \tilde{u}(\by_s) - v)| \\
&\corr{+|\eset{\tilde{l}(\by_s;\tilde{u}(\by_s) - v) - 
\tilde{l}(\by_s,\by_f;\tilde{u}(\by_s) - v)}|}).
\end{split}
\]
\corg{Now, pick $v = \tilde{u}(\by_s,\by_f)$, thus
\[
  \| \tilde{u}(\by_s,\by_f) - \tilde{u}(\by_s) \|_{L^{2}_{\rho}(\Gamma; H^{1}_{0}(U))}
 \leq \corg{\cal C}({\cal B}_1 + {\cal B}_2
),
\]
where
\[
\begin{split}
{\cal B}_1 & := \sup_{w \in L^{2}_{\rho}(\Gamma; H^{1}_{0}(U))}
\frac{|{\cal A}(\tilde u(\by_s,\by_f), w) - {\cal A}_{N_s}(\tilde u(\by_s,\by_f),
  w)|}{\| w \|_{L^{2}_{\rho}(\Gamma; H^{1}_{0}(U))}} \\ 
& \leq 
\| \tilde{u}(\by_s,\by_f) \|_{L^{2}_{\rho}(\Gamma; H^{1}_{0}(U))} 
\sup_{x
  \in U, {\bf y} \in \Gamma} 
\|G({\bf y}_s,{\bf y}_f) -
G({\bf y}_{s})\|,
\end{split}
\]
and
\[
\corg{{\cal B}_2 := \sup_{w \in L^{2}_{\rho}(\Gamma;
    H^{1}_{0}(U))} \| w \|^{-1}_{L^{2}_{\rho}(\Gamma; H^{1}_{0}(U))}
|\eset{\tilde{l}(\by_s;\tilde{u}(\by_s) - w) - 
\tilde{l}(\by_s,\by_f;\tilde{u}(\by_s) - w)}|.}
\]
}
\noindent {\bf Bound for ${\cal B}_1$:}
For notational simplicity we rewrite \eqref{analyticity:eqn1} as
\[
\partial F({\bf y}_s,{\bf y}_f) = I + A^{s}_{N_{s}}({\bf y}_{s})
 + A^{f}_{N_{f}}({\bf y}_{f})
\]
for some set of matrices $A^{s}_{N_{s}}, A^{f}_{N_{f}} \in \R^{d
  \times d} \times U \times \Gamma $.  With a slight abuse of notation
we refer to $\partial F({\bf y}_s)$ as $\partial F({\bf y}_s) := I +
A^{s}_{N_{s}}({\bf y}_{s})$. Note that $\F_{min} \leq
\sigma_{min}(\partial F({\bf y}_s,{\bf y}_f)) \Rightarrow \F_{min}
\leq \sigma_{min}(\partial F({\bf y}_s))$ and $\sigma_{min}(\partial
F({\bf y}_s,{\bf y}_f)) \leq \F_{max} \Rightarrow
\sigma_{min}(\partial F({\bf y}_s))\leq \F_{max}$.

We now estimate the term $\|G({\bf y}_{s},{\bf y}_{f}) - G({\bf
    y}_{s})\|$.  Denoting $J({\bf y}_{s},{\bf y}_{f}):=|\partial
  F({\bf y}_s,{\bf y}_f)|$ $\partial F({\bf y}_{s},\by_f)^{-1}
  \partial F({\bf y}_{s},\by_f)^{-T}$ and similarly for $J({\bf
    y}_{s})$ we have
\begin{equation}
\corg{
\begin{split}
\|G({\bf y}_{s},{\bf y}_{f}) - G({\bf y}_{s})\|
& \leq 
a_{max}\| J({\bf y}_{s},{\bf y}_{f}) -    J(\by_s) \|.
\end{split} 
}
\label{errorestimates:truncation:eqn1}
\end{equation}
Now, \corred{$\forall x\in U$ and $\forall \by \in \Gamma$}
\begin{equation}
\begin{split}
\| J({\bf y}_{s},{\bf y}_{f}) -    J(\by_s) \| 
&\corr{=
\| J({\bf y}_{s},{\bf y}_{f}) -    J(\by_s) 
\pm |\partial F({\bf y}_s,{\bf y}_f)| \partial F({\bf y}_{s})^{-1} 
\partial F({\bf y}_{s})^{-T} \| }\\
& \leq 
|\partial F({\bf y}_s,{\bf y}_f)|\| 
\partial F({\bf y}_{s},\by_f)^{-1} 
\partial F({\bf y}_{s},\by_f)^{-T}
-
\partial F({\bf y}_{s})^{-1} 
\partial F({\bf y}_{s})^{-T}
\| \\
&+ 
\big||\partial F({\bf y}_s,{\bf y}_f)| - |\partial F({\bf y}_s)|\big|
\|\partial F({\bf y}_{s})^{-1} 
\partial F({\bf y}_{s})^{-T}\| \\
&\leq 
\F_{max}^{d}\| 
\partial F({\bf y}_{s},\by_f)^{-1} 
\partial F({\bf y}_{s},\by_f)^{-T}
-
\partial F({\bf y}_{s})^{-1} 
\partial F({\bf y}_{s})^{-T}
\| \\
& + \F_{min}^{-2} \big|
|\partial F({\bf y}_s,{\bf y}_f)| - |\partial F({\bf y}_s)| \big|.
\end{split}
\label{errorestimates:truncation:eqn2}
\end{equation}
Applying the matrix identity $(A-BD^{-1}C)^{-1} =
A^{-1}+A^{-1}B(D-CA^{-1}B)^{-1}CA^{-1}\,$ where $A = I +
A^{s}_{N_{s}}({\bf y}_{s})$, $B = -A^{f}_{N_{f}}({\bf y}_{f})$ and $C =
D = I$ we obtain
\[
\partial F({\bf y}_{s},\by_f)^{-1} = 
\partial F({\bf y}_{s})^{-1} 
+ E({\bf y}_{s},\by_f),
\]
where
\[
\begin{split}
E({\bf y}_{s},\by_f) &:=-\partial F({\bf y}_{s})^{-1}
A^{f}_{N_{f}}({\bf y}_{f})(I + \partial F({\bf y}_{s})^{-1}
A^{f}_{N_{f}}({\bf y}_{f}))^{-1} \partial F({\bf y}_{s})^{-1} \\ & =
-\partial F({\bf y}_{s})^{-1} A^{f}_{N_{f}}({\bf y}_{f})
\partial F({\bf y}_{s},\by_f)^{-1}, 
\end{split}
\]
then
\[
\begin{split}
\partial F({\bf y}_{s},\by_f)^{-1} 
\partial F({\bf y}_{s},\by_f)^{-T}
-
\partial F({\bf y}_{s})^{-1} 
\partial F({\bf y}_{s})^{-T}
&=  E({\bf y}_{s},\by_f) E({\bf y}_{s},\by_f)^{T} \\
&+ \partial F({\bf y}_{s})^{-1}E({\bf y}_{s},\by_f)^{T} \\
&+ E({\bf y}_{s},\by_f)\partial F({\bf y}_{s})^{-T}.
\end{split}
\]
Now, \corred{$\forall x\in U$ and $\forall \by \in \Gamma$}
\[
\| E({\bf y}_{s},\by_f)\| 
\leq \F_{min}^{-2} 
\sum_{i = N_s+1}^{N} \sqrt{\mu_{i}} \|B_{i}(x)\|.
\]
It follows that \corred{$\forall x\in U$ and $\forall \by \in \Gamma$}
\begin{equation}
\begin{split}
\| 
\partial F({\bf y}_{s},\by_f)^{-1} 
\partial F({\bf y}_{s},\by_f)^{-T}
-
\partial F({\bf y}_{s})^{-1} 
\partial F({\bf y}_{s})^{-T}
\| & \leq 
B_{\T}\F_{min}^{-3}(2 + \F_{min}^{-1} (1 - \tilde{\delta})). \\
\end{split}
\label{errorestimates:truncation:eqn3}
\end{equation}
From Theorem 2.12 in \cite{Ipsen2008}
($A,E \in \C^{d \times d}$ then $|det(A + E) - det(A)| \leq d \|E\| \max \{
\|A\|,$ $\|A + E\|\}^{d-1})$ we obtain
\corred{$\forall x\in U$ and $\forall \by \in \Gamma$}
\begin{equation}
\big| |\partial F({\bf y}_s,{\bf y}_f)| -|\partial F({\bf y}_{s})|
\big| \leq \F_{max}^{d-1} \F^{-2}_{min}B_{\T}d.
\label{errorestimates:truncation:eqn4}
\end{equation}
Combining \corr{equations} 
\eqref{errorestimates:truncation:eqn2},
\eqref{errorestimates:truncation:eqn3} and
\eqref{errorestimates:truncation:eqn4} we obtain
\[
\corg{\sup_{x \in U, \by \in \Gamma}} \|J({\bf y}_s,{\bf y}_f) -
J({\bf y}_{s}) \| \leq  B_{\T}
H(\F_{max},\F_{min},\tilde{\delta},d).
\]
\corg{Thus,
\[
\begin{split}
{\cal B}_1
 & \leq 
a_{max} B_{\T}H(\F_{max},\F_{min},\tilde{\delta},d)
\| \tilde{u}(\by_s,\by_f) \|_{L^{2}_{\rho}(\Gamma; H^{1}_{0}(U))}.
\end{split}
\]
}
\noindent {\bf Bound for ${\cal B}_2$:}
\begin{equation}
\begin{split}
{\cal B}_2 
&\leq  
\sup_{w \in L^2_{\rho}(\Gamma;H^1_0(U))} 
\|w\|^{-1}_{L^2_{\rho}(\Gamma;H^1_0(U))}
| \mathbb{E}\Big[
\int_U (f \circ F(\by_s) |\partial F(\by_s)|  \\
&- 
f \circ F(\by_s,\by_f) |\partial F(\by_s,\by_f)|)
w \\
&+      
\int_U \corm{(\nabla \hat \bw)^T(
G(\by_s) -
G(\by_s,\by_f))}
\nabla w \Big] | \\
& \leq 
\sup_{w \in L^2_{\rho}(\Gamma;H^1_0(U))} 
\|w\|^{-1}_{L^2_{\rho}(\Gamma;H^1_0(U))}
\mathbb{E}\Big[
\int_U 
|(f \circ F(\by_s)  - f \circ F(\by_s,\by_f))
|\partial F(\by_s)|w| 
\\
&+     
|f \circ F(\by_s,\by_f)(|\partial F(\by_s)| 
- |\partial F(\by_s,\by_f)|)w| \\
&
+ 
\corm{
|(\nabla \hat \bw )^T (G(\by_s) - G(\by_s,\by_f)) \nabla w |
}
\Big].
\end{split}
\label{erroranalysis:eqn2}
\end{equation}
Now we have that
\begin{equation}
\begin{split}
& 
\sup_{w \in L^2_{\rho}(\Gamma;H^1_0(U))} 
\|w\|^{-1}_{L^2_{\rho}(\Gamma;H^1_0(U))}
\mathbb{E} \Big[
\int_U 
|(f \circ F(\by_s)  - f \circ F(\by_s,\by_f))
|\partial F(\by_s)| w| \Big] \\
& \leq 
\sup_{w \in L^2_{\rho}(\Gamma;H^1_0(U))} 
\|w\|^{-1}_{L^2_{\rho}(\Gamma;H^1_0(U))}
\mathbb{E}\Big[ \corred{\max_{x \in U}}
|\partial F(\by_s)|
\corred{
\| f \circ F(\by_s)  - f \circ F(\by_s,\by_f) \|_{L^{2}(U)}}
\| w \|_{L^{2}(U)} \Big] \\
& \leq 
\sup_{w \in L^2_{\rho}(\Gamma;H^1_0(U))} 
\|w\|^{-1}_{L^2_{\rho}(\Gamma;H^1_0(U))}
\F_{max}^{d}
\mathbb{E}\Big[ 
\| \corred{f \circ F(\by_s)  - f \circ F(\by_s,\by_f)} \|_{L^{2}(U)} \\
&
C_P(U)
\| w \|_{H^{1}_0(U)} \Big] \\
& \leq 
\F_{max}^{d} C_P(U)
\mathbb{E}\Big[ 
\| \corred{f(\cdot,\by)} \|_{W^{1,\infty}(\corred{\cal G})}
\|\corred{F(\by_s)  - F(\by_s,\by_f)}\|_{L^{2}(U)} \Big] \\
& \leq 
\F_{max}^{d} C_P(U) \| \corred{f} \|_{L^{\infty}(\Gamma;W^{1,\infty}(\corred{\cal G}))}
\| \chi_{U} \|_{L^{2}(U)}
\sup_{\by \in \Gamma, x \in \Gamma} |F(\by_s)  - F(\by_s,\by_f) |,
\end{split}
\label{erroranalysis:eqn3}
\end{equation}
where $\chi_{U}$ is the characteristic function defined on $U$. Note
that \corred{$\forall x\in U$ and $\forall \by \in \Gamma$}
\begin{equation}
\begin{split}
&     
\sup_{w \in L^2_{\rho}(\Gamma;H^1_0(U))} 
\|w\|^{-1}_{L^2_{\rho}(\Gamma;H^1_0(U))}
\E \Big[ 
\int_U |f \circ F(\by_s,\by_f)(|\partial F(\by_s)| 
- |\partial F(\by_s,\by_f)|)w| \Big] \\
&\leq
\sup_{w \in L^2_{\rho}(\Gamma;H^1_0(U))} 
\|w\|^{-1}_{L^2_{\rho}(\Gamma;H^1_0(U))}
\E \Big[ \int_U \|f \circ F(\by_s,\by_f) \|_{L^{2}(U)} \|w\|_{L^{2}(U)} \Big] \\
&
\sup_{\by \in \Gamma, x \in U } ||\partial F(\by_s)| 
- |\partial F(\by_s,\by_f)||
\\
&\leq
C_P(U)
\|f \circ F(\by_s,\by_f) \|_{L^{2}(\Gamma;L^{2}(U))} 
\sup_{\by \in \Gamma, x \in U } ||\partial F(\by_s)| 
- |\partial F(\by_s,\by_f)||, 
\end{split}
\label{erroranalysis:eqn4}
\end{equation}
and
\begin{equation}
\begin{split}
&\sup_{w \in L^2_{\rho}(\Gamma;H^1_0(U))} 
\|w\|^{-1}_{L^2_{\rho}(\Gamma;H^1_0(U)}
\E \Big[ 
\corm{(\nabla \hat \bw)^T
(G(\by_s)
-
G(\by_s,\by_f))}
\nabla w 
\Big] 
\\
&\leq
\| \corm{\hat \bw \|_{L^{2}(\Gamma;H^{1}_0(U))}
\sup_{\by \in \Gamma, x \in U} \|G(\by) - G(\by_s)\|}.
\end{split}
\label{erroranalysis:eqn6}
\end{equation}
Substituting \eqref{errorestimates:truncation:eqn1},
\eqref{errorestimates:truncation:eqn4}, \eqref{erroranalysis:eqn3},
\eqref{erroranalysis:eqn4}, 
and
\eqref{erroranalysis:eqn6} in \eqref{erroranalysis:eqn2} we obtain
\[
\begin{split}
{\cal B}_2
&\leq 
B_{\T}\Big(C_{P}(U) \F_{max}^{d-1} \F_{min}^{-2}d
\|f \circ F(\by_s,\by_f) \|_{L^{2}(\Gamma;L^{2}(U))} \\
&+ 
\corm{a_{max}} H(\F_{max},\F_{min},\tilde{\delta},d) 
\| \corred{\hat \bw} \|_{L^{2}(\Gamma;H^{1}_0(U))} \Big) \\
&+
\corred{C_{\T} \| \hat v \|_{L^{\infty}(U)}
\F_{max}^{d} C_P(U) \| f \|_{L^{\infty}(\Gamma;W^{1,\infty}({\cal G}))}
\| \chi_{U} \|_{L^{2}(U)} }.
\end{split}
\]
The result follows.
\end{pf}
\subsection{Finite Element Error (II)} 
\label{erroranalysis:finiteelement}
 
The second quantity controls the convergence with respect to the mesh
size $h$. This will be determined by the polynomial order of the
finite element subspace $H_{h}(U) \subset H^{1}_{0}(U)$ and the
regularity of the solution $u$.  From
\eqref{collocation:perturbation:finiteelement} we obtain the following
bound:
\[
\|\tilde{u}({\bf y}_{s}) - u_{h}({\bf y}_{s})
\|_{L^{2}_{\rho}(\Gamma_s; H^{1}_{0}(U))} \leq
C_{\Gamma_s}(r)h^{r}
\]
for some constant $r \in \N$ and $C_{\Gamma_s}(r):= \int_{\Gamma_s}
C(r,\tilde{u}(\by_s))\rho(\by_s)d\by_s$. The constant $r$ depends on the
polynomial degree of the finite element basis and the regularity
properties of the solution $u$ (which is dependent on the regularity
of $f$, the diffusion coefficient $a$ and the mapping $F$). Similarly
the error for the influence function is characterized as
\[
\|\varphi({\bf y}_{s}) - \varphi_{h}({\bf y}_{s})
\|_{L^{2}_{\rho}(\Gamma_s; H^{1}_{0}(U))} \leq D_{\Gamma_s}(r)h^{r},
\]
where $D_{\Gamma_s}(r):= \int_{\Gamma_s}
C(r,\varphi(\by_s))\rho(\by)d\by$. Following duality arguments we
obtain
\begin{equation}
\eset{|Q(\tilde{u}({\bf y}_{s}) - u_{h}({\bf y}_{s})) | }
 \leq 
a_{max} \F^{d}_{max} \F^{-2}_{min} C_{\Gamma_s}(r)D_{\Gamma_s}(r)
h^{2r}.
\label{errorestimates:finiteelement:eqn1}
\end{equation}
\noindent 
\subsection{Sparse Grid Error (III)} 
In this section we shall not enumerate all the convergence rates that
depend on the formulas from Table \ref{sparsegrid:table1}, but refer
the reader to the appropriate citations. However, we will only
explicitly derive the convergence rates for the isotropic Smolyak
sparse grid.

Given the bounded linear functional $Q$ we have that
 \[
   \| Q(u_h({\bf y}_{s})) - Q(\mcS^{m,g}_w[u_h({\bf y}_{s})])
   \|_{L^{2}_{\rho}(\Gamma_s)} \leq a_{max}\F^{d}_{max} \F_{min}^{-2}
   \|e\|_{L^{2}_{\rho}(\Gamma_{s};H^{1}_{0}(U))},
\]
where $e:=u_h({\bf y}_{s}) - \mcS^{m,g}_w[u_h({\bf y}_{s})]$.
However, as noted in Section \ref{sparsegrid}, the sparse grid is
computed with respect to the auxiliary density function $\hat{\rho}$,
thus
\[
\|e\|_{L^{2}_{\rho}(\Gamma_{s};H^{1}_{0}(U))} \leq
 \left\|
\frac{\rho}{\hat{\rho}}
\right\|_{L^{\infty}(\Gamma_s)}
\|e\|_{L^{2}_{\hat{\rho}}(\Gamma_{s};H^{1}_{0}(U))}.
\]
The error term
$\|e\|_{L^{2}_{\hat{\rho}}(\Gamma_{s};H^{1}_{0}(U))}$ is
controlled by the number of collocation knots $\eta$ (or work), the
choice of the approximation formulas $(m(i), g({\bf i}))$ from Table
\ref{sparsegrid:table1}, and the region of analyticity of
$\Theta_{\beta} \subset \mathbb{C}^{N_s}$. From Theorem
\ref{analyticity:theorem1} the solution $\tilde{u}(\by_s)$ admits an
extension in $\C^{N_s}$ i.e. $\by_s \rightarrow \bz_s \in \C^{N_s}$
and $\tilde{u}(\bz_{s}) \in C^{0}(\Theta_{\beta};H^{1}_{0}(U))$. All
the results proved in Section \ref{analyticity} can be obtained also
for the semi-discrete solution $u_{h}({\by_s})$ which admits an
analytic extension in the same region $\Theta_{\beta}$ and
$u_{h}(\bz_s) \in C^{0}(\Theta_{\beta};H_{h}(U))$.

In \cite{nobile2008b, nobile2008a} the authors derive error estimates
for isotropic and anisotropic Smolyak sparse grids with
Clenshaw-Curtis and Gaussian abscissas where
$\|e\|_{L^{2}_{\hat{\rho}}(\Gamma_{s};H^{1}_{0}(U))}$ exhibit
algebraic or subexponential convergence with respect to the number of
collocation knots $\eta$ (See Theorems 3.10, 3.11, 3.18 and 3.19
\corg{in \cite{nobile2008a}} for more details). However, for these
estimates to be valid the solution $u$ has to admit and extension on a
polyellipse in $\C^{N_s}$, ${\cal E}_{\sigma_1, \dots, \sigma_{N_s}} :
= \Pi_{i=1}^{N_s}{\cal E}_{n,\sigma_n}$, where
\[
{\cal E}_{n,\sigma_n} = \left\{ z \in \C;\,\Real(z) = \frac{e^{\sigma_n}
 +
  e^{-\sigma_n}}{2}cos(\theta),\,\,\,\Imag(z)= \frac{e^{\sigma_n} -
  e^{-\sigma_n}}{2}sin(\theta),\theta \in [0,2\pi) \right\},\,\,\,
\]
and $\sigma_n > 0$.  
For an isotropic sparse grid the overall asymptotic subexponential
decay rate $\hat{\sigma}$ will be dominated by the smallest $\sigma_n$
i.e.
\[
\hat{\sigma} \equiv \min_{n = 1,\dots, N_s} \sigma_n.
\]
Then the goal is to choose the largest $\hat{\sigma}$ such that ${\cal
  E}_{\sigma_1, \dots, \sigma_{N_s}} \subset \Theta_{\beta}$. First,
recall from Section \ref{analyticity} that
\[
\corr{\Theta_{\beta} : = \left\{ {\bf z} \in \mathbb{C}^{N};\, {\bf z} =
      {\bf y} + {\bf w},\,{\bf y} \in [-1,1]^{N_s},\, \sum_{l=1}^{N_s}
      \sup_{x \in U} \| B_l(x) \|_{2} \sqrt{\mu_{l}}
      |w_{l}| \leq \beta  \right\}}.
\]
We can now form the set $\Sigma \subset \C^{N_s}$ such that
$\Sigma \subset \Theta_{\beta}$, where $\Sigma:=\Sigma_1 \times \dots
\times \Sigma_{N_s}$ and
\[
\Sigma_{n} := \left\{ {\bf z} \in \mathbb{C};\, {\bf z} = {\bf y}
+ {\bf w},\,{\bf y} \in [-1,1],\, |w_n| \leq \tau_n :=
\frac{\beta}{1 - \tilde{\delta}}
\right\}.
\]
for $n = 1,\dots,N_s$.  The polyellipse ${\cal E}_{\sigma_1, \dots,
  \sigma_n}$ can now be embedded in $\Sigma$ by choosing
  $\sigma_1 = \sigma_2 = \dots = \sigma_{N_s} = \hat{\sigma} =
  \log{(\sqrt{\tau^2_{N_s} + 1} + \tau_{N_s} )} > 0$.


From Theorem 3.11 \cite{nobile2008a}, given a sufficiently large
$\eta$ for a nested CC sparse grid we obtain the following estimate
\begin{equation}
\|e\|_{L^{2}_{\hat{\rho}}(\Gamma_{s};H^{1}_{0}(U))} \leq {\cal
  Q}(\sigma,\delta^{*},N_s)\eta^{\mu_3(\sigma,\delta^{*},N_s)}\exp
  \left(-\frac{N_s \sigma}{2^{1/N_s}} \eta^{\mu_2(N_s)} \right), \\
\label{erroranalysis:sparsegrid:estimate}
\end{equation}
where 
\[{\cal Q}(\sigma,\delta^{*},N_s) := 
\frac{C_1(\sigma,\delta^{*})}{\exp(\sigma \delta^{*} \tilde{C}_2(\sigma)  )}
\frac{\max\{1,C_1(\sigma,\delta^{*})\}^{N_s}}{|1 - C_1(\sigma,\delta^{*})|}, 
\]
$\sigma = \hat{\sigma}/2$, $\mu_2(N_s) = \frac{log(2)}{N_s(1 +
  log(2N_s))}$ and $\mu_3(\sigma,\delta^{*},N_s,) = \frac{\sigma
  \delta^{*}\tilde{C}_2(\sigma)}{1 + \log{(2N_s)}}$.  The constants
$C_1(\sigma,\delta^{*})$, $\tilde{C}_2(\sigma)$ and $\delta^{*}$ are
defined in \cite{nobile2008a} \corr{equations} (3.11) and (3.12).

\section{Complexity and Tolerance}
\label{complexity} In this section we derive
the total work $W$ needed such that $|var[Q(\by_s,$ $\by_f)] -
var[\mcS^{m,g}_w[Q_{h}({\bf y}_{s})]] |$ and
$|\mathbb{E}[Q(\by_s,\by_f)]$ $-\mathbb{E}[\mcS^{m,g}_w$ $[
    Q_{h}(\by_s)]]|$ for the isotropic CC sparse grid are less than or equal
to a given tolerance parameter $tol \in \R^{+}$.

Let $N_{h}$ be the number of degrees of freedom to solve the
semi-discrete approximation $u_h \in H_{h}(U) \subset H^{1}_{0}(U)$.
We assume that the complexity for solving for $u_{h}$ is ${\cal O}(
N^{q}_{h})$, where the constant $q \geq 1$ reflects the optimality of
the finite element solver. The cost of solving the approximation of
the influence function $\varphi_{h} \in H_{h}(U)$ is also ${\cal O}(
N^{q}_{h})$. Thus for any ${\bf y}_{s} \in \Gamma_{s}$, the cost for
computing $Q_{h}({\bf y}_{s}):=B({\bf y}_{s};u_{h}({\bf y}_{s}),
\varphi_{h}({\bf y}_{s}))$ is ${\cal O}(N^{q}_{h})$.

Let $\mcS^{m,g}_{\lv}$ be the sparse grid operator characterized
  by $m(i)$ and $g({\bf i})$. Furthermore, let
  $\eta(N_s,m,g,\lv,\Theta_{\beta})$ be the number of the sparse grid
  knots.  The total work for computing the variance $\mathbb{E}[
  (\mcS^{m,g}_{\lv} [Q_{h}({\bf y}_{s})])^2]- $
$\mathbb{E}[\mcS^{m,g}_{\lv} [Q_{h}({\bf y}_{s})]]^{2}$ and the mean
term $\mathbb{E}[\mcS^{m,g}_{\lv} [Q_{h}({\bf y}_{s})]]$ with respect
to a given user tolerance is
\[
W_{Total}(tol) = D_1 N^{q}_{h}(tol)\eta(tol)
\]
for some constant $D_1 > 0$. We now separate the analysis into
three parts:
\begin{enumerate}[(a)]
\item {\bf Truncation:} From the truncation estimate derived in
  section \ref{errorestimates:truncation} we seek $\| Q({\bf y}_s,$
  ${\bf y}_f) - Q({\bf y}_s)\|_{L^{2}_{\rho}(\Gamma)} \leq
  \frac{tol}{3C_{T}}$ with respect to the decay of $\mu_{i}$.  First,
  make the assumption that $B_{\T} = \sup_{x \in U} \sum_{i =
    N_s+1}^{N} \sqrt{\mu_{i}} \|B_{i}(x)\|_{2} \leq C_{D}N^{-l}_s$ for
  some uniformly bounded $C_{D}>0$.  Furthermore, assume that
  $\|b_i(x)\|_{L^{\infty}(U)} \leq D_{D}\sup_{x \in
    U}{\|B_i(x)\|_{2}}$ for $i = 1,\dots N$ where $D_D >0$ is
  uniformly bounded, thus \corg{$C_{\T}=\sup_{x \in U} \sum_{i =
      N_s+1}^{N} \sqrt{\mu_{i}} \|b_{i}(x)\|_{2} $ $\leq$
    $C_{D}D_{D}N^{-l}_s$}.  It follows that $\| Q({\bf y}_s,$ ${\bf
    y}_f) - Q({\bf y}_s)\|_{L^{2}_{\rho}(\Gamma)} \leq
  \frac{tol}{3C_{T}}$ if
\[
B_{\T}\leq C_{D}N^{-l}_s \leq D_{2}tol
\]
for some constant $D_{2} > 0$.  
Finally, we have that
\[
N_{s}(tol) \geq \left\lceil \left( \frac{D_{2} tol}{C_{D}}
\right)^{-1/l} \right\rceil.
\]
\item {\bf Finite Element:} From Section
  \ref{erroranalysis:finiteelement} if
\[
h(tol) \leq \left( \frac{tol}{3C_{FE}
a_{min} \F^{d}_{min} \F^{-2}_{max} C_{\Gamma_s}(r)D_{\Gamma_s}(r) }
\right)^{1/2r}
\]
then $\|Q({\bf y}_{s}) - Q_h({\bf y}_{s})
\|_{L^{2}_{\rho}(\Gamma; H^{1}_{0}(U))} \leq
\frac{tol}{3C_{FE}}$. Assuming that $N_h$ grows as ${\cal O}(h^{-d})$
then
\[
N_h(tol) \geq \left\lceil D_3 \left( \frac{tol}{3C_{FE} a_{min}
  \F^{d}_{min} \F^{-2}_{max} C_{\Gamma_s}(r)D_{\Gamma_s}(r) }
\right)^{-d/2r} \right\rceil
\]
for some constant $D_3 > 0$.
\item {\bf Sparse Grid:} Following the same strategy as in
  \cite{nobile2008a} (equation (3.39)), to simplify the bound
  \eqref{erroranalysis:sparsegrid:estimate} choose $\delta^{*} = (e
  \log{(2)} - 1)/\tilde{C}_2(\sigma)$ and
  $\tilde{C}_2(\sigma)$. Thus
    $\|e\|_{L^{2}_{\hat{\rho}}(\Gamma_{s};H^{1}_{0}(U))} \leq
    \frac{tol}{3C_{SG}C_{T}} \left\| \frac{\rho}{\hat{\rho}}
    \right\|^{-1}_{L^{\infty}(\Gamma_s)} $ if
\[
\eta(tol) \geq \left\lceil \left( \frac{3 \| \rho / \hat{\rho}
\|_{L^{\infty}(\Gamma_s)}
 C_{SG} C_{T} C_{F}F^{N_s}
  \exp(\sigma (\beta,\tilde{\delta}))}{tol}
\right)^{\frac{1 + \log(2N_s)}{\sigma}} \right\rceil
\]
where $C_F = \frac{C_1(\sigma,\delta^{*})}{|1 - C_1(\sigma,\delta^{*})|}$
and $F = \max\{1,C_1(\sigma,\delta^{*})\}$.
\end{enumerate}
\bigskip
Combining (a), (b) and (c) we obtain that for a given user error
tolerance $tol$ the total work is
\[
\begin{split}
W_{Total}(tol) 
&= D_1 N^{q}_{h}(tol,D_3)\eta(\tilde{\delta},\beta,N_s(tol), \| \rho / \hat{\rho}
\|_{L^{\infty}(\Gamma_s)}) \\ 
&= {\cal O}\left( \left( \frac{\| \rho / \hat{\rho}
\|_{L^{\infty}(\Gamma_s)} F^{C tol^{-1/l}}}{tol}
\right)^{\sigma^{-1}(1 + \corr{l^{-1}(\log{2C} - \log{tol})}  )}
 \right). 
\end{split}
\]
for some $C > 0$.

\section{Numerical Results}
\label{numericalresults}
We test our method on a square domain.  Suppose the reference domain
is set $U = (0,1) \times (0,1)$ and stochastically deforms according
to the following rule:
\[
\begin{array}{llll}
  F(x_{1}, x_{2}) = (x_1, x_2) + e(x_1,\omega)(0,\,x_{2}-0.5) & &
  if & x_{2} > 0.5 \\
  F(x_{1}, x_{2}) = (x_{1},\,x_{2}) & & if & 0 \leq x_{2} \leq 0.5
\end{array}
\]
\noindent for some positive constant $c > 0$.  In other words we
deform only the upper half of the domain and fix the \corr{bottom}
half. We set the Dirichlet boundary conditions to zero everywhere
except at the upper border to \corm{$\tilde{u}(x_{1},x_{2})|_{\partial
    U} = g(x_{1})$,} where $g(x_{1}) : =exp \left( \frac{-1}{1 -
  4(x_{1}-0.5)^2} \right)$ (See Figure
\ref{numericalresults:fig1}). This implies that the value at the upper
boundary does not change with boundary perturbation but the solution
does become stochastic with respect to the domain perturbation.
Consider a QoI defined on the bottom half of the reference domain,
which is not deformed, as
\[
Q(u) := \int_{(0,1)} \int_{(0,1/2)}
g(x_{1})g(2x_{2})\corr{\tilde{u}(x_1,x_2,\omega)} \,dx_{1}dx_{2}.
\]
We now show a numerical example with linear decay on the gradient
of the deformation, i.e. the gradient terms
$\sqrt{\mu_{n}} \sup_{x \in U} \|B_n(x)\|$ decay linearly as
$n^{-1}$.

\begin{figure}[h]
\hspace{-5mm}
\psfrag{a}[cb]{\tiny Nominal Domain}
\psfrag{x}[c]{\tiny $x_{1}$} \psfrag{$x_{2}$}[c]{\tiny y} 
\psfrag{quantityofinterest}[][r]{\tiny QoI } 
\psfrag{stochasticdomain}[][r]{\tiny St. Domain}
\psfrag{Realization}[br][br]{\small Realization}
\psfrag{Reference}[br][br]{\small Reference}
\psfrag{d1}[c][bl][1][90]{\tiny \corm{$u|_{\partial U} = 0$}} 
\psfrag{d2}[c][bl][1][0]{\tiny \corm{$u|_{\partial U} = 0$}} 
\psfrag{d3}[c][bl][1][90]{\tiny \corm{$u|_{\partial U} = 0$}} 
\psfrag{d4}[c][bl][1][0]{\tiny \corm{$u|_{\partial U} = 
\corr{exp \left( \frac{-1}{1 - 4(x_{1}-0.5)^2} \right)  }$ } } 
   \includegraphics[ width=5.4in, height=2.2in
  ]{./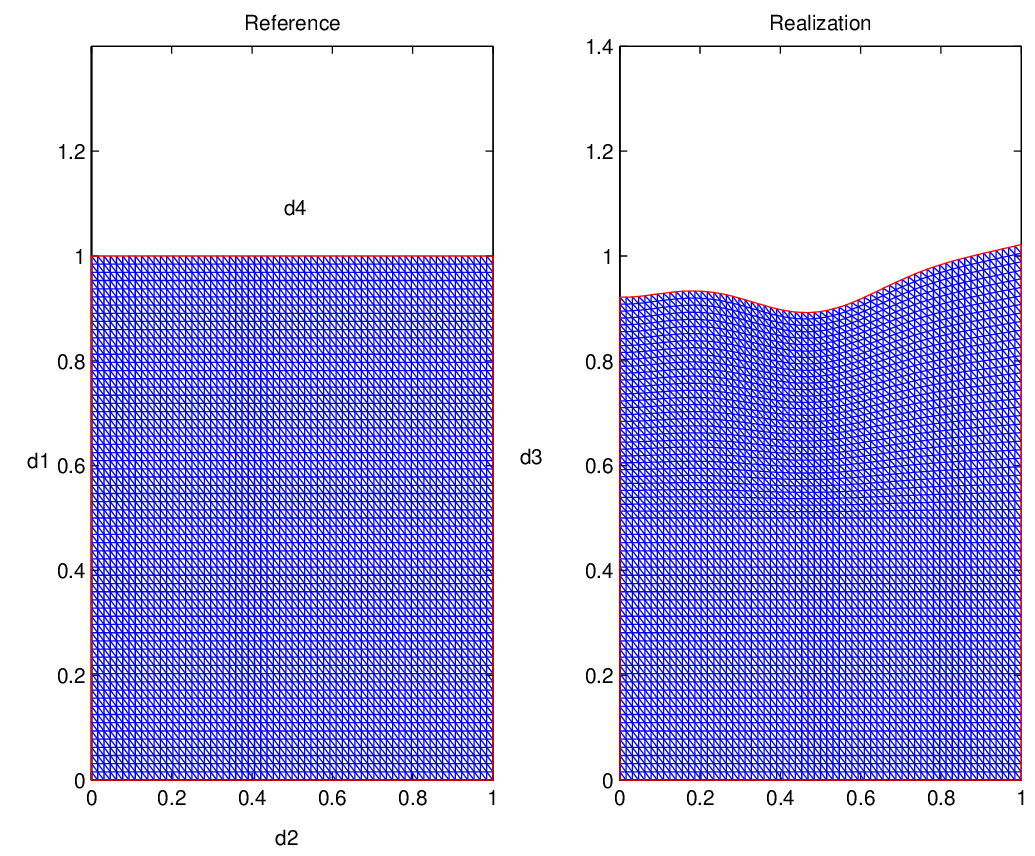} 
  \caption{Stochastic deformation of a square domain. (left) Reference
    square domain with Dirichlet boundary conditions. (right) Vertical
    deformation from stochastic model.}
\label{numericalresults:fig1}
\end{figure}

\begin{figure}[h]
\begin{center}
\begin{tabular}{cc}
\psfrag{N1 = 2aaaaaaaaa}[][c]{\tiny $N_s = 2$}
\psfrag{N1 = 3aaaaaaaaa}[][c]{\tiny $N_s = 3$}
\psfrag{N1 = 4aaaaaaaaa}[][c]{\tiny $N_s = 4$}
\psfrag{N1 = 5aaaaaaaaa}[][c]{\tiny $N_s = 5$}
\psfrag{N1 = 6aaaaaaaaa}[][c]{\tiny $N_s = 6$}
\psfrag{N1 = 7aaaaaaaaa}[][c]{\tiny $N_s = 8$}
\psfrag{knots}{\small knots}
\psfrag{b}[][c]{\small $|\mathbb{E}[Q(u_{ref})] - \mathbb{E}[\mcS^{m,g}_w[Q(u_h)]]|
$}
\psfrag{Mean Error}[][c]{\small Mean Error}
 \includegraphics[width=2.4in,height=2.2in]{./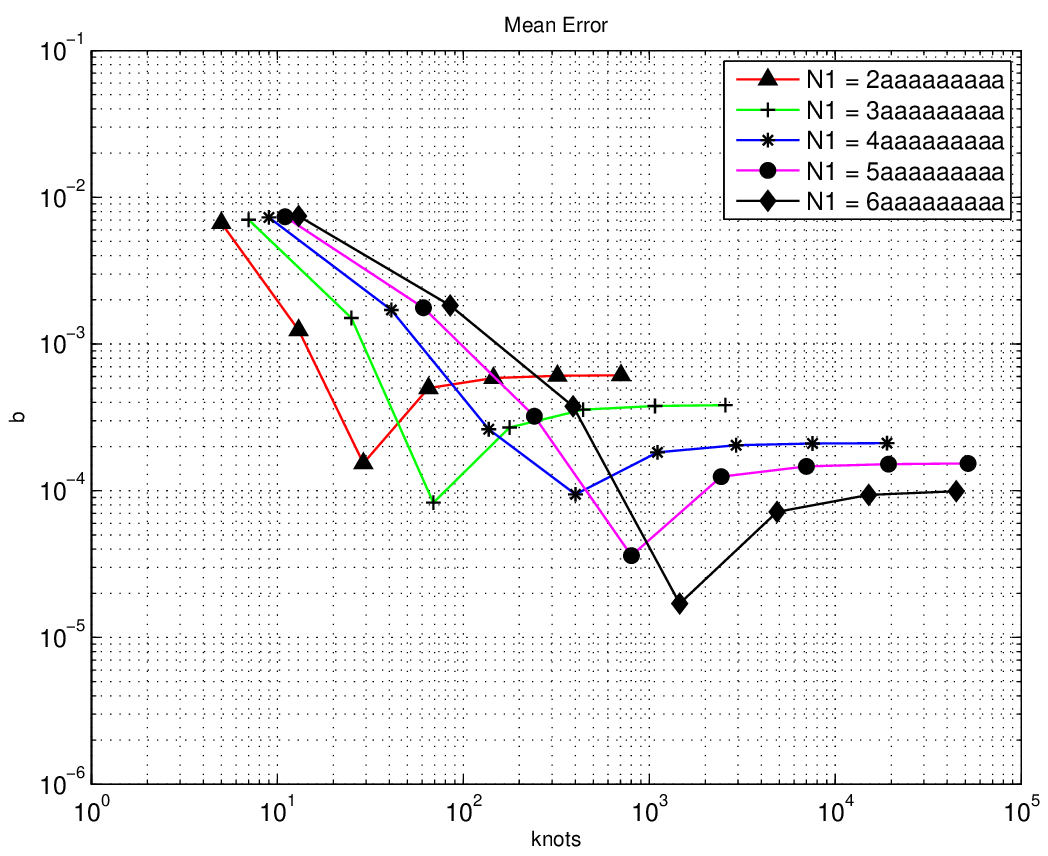} 
&
\psfrag{knots}[][c]{\small knots}
\psfrag{N1 = 2aaaaaaaaa}[][c]{\tiny $N_s = 2$}
\psfrag{N1 = 3aaaaaaaaa}[][c]{\tiny $N_s = 3$}
\psfrag{N1 = 4aaaaaaaaa}[][c]{\tiny $N_s = 4$}
\psfrag{N1 = 5aaaaaaaaa}[][c]{\tiny $N_s = 5$}
\psfrag{N1 = 6aaaaaaaaa}[][c]{\tiny $N_s = 6$}
\psfrag{N1 = 7aaaaaaaaa}[][c]{\tiny $N_s = 8$}
\psfrag{Var Error}[][c]{\small Var Error (Collocation)}
\psfrag{b}[][c]{\small $|Var[Q(u_{ref})] - Var[\mcS^{m,g}_w[Q(u_h)]]|$} 
\includegraphics[width=2.4in,height=2.2in]{./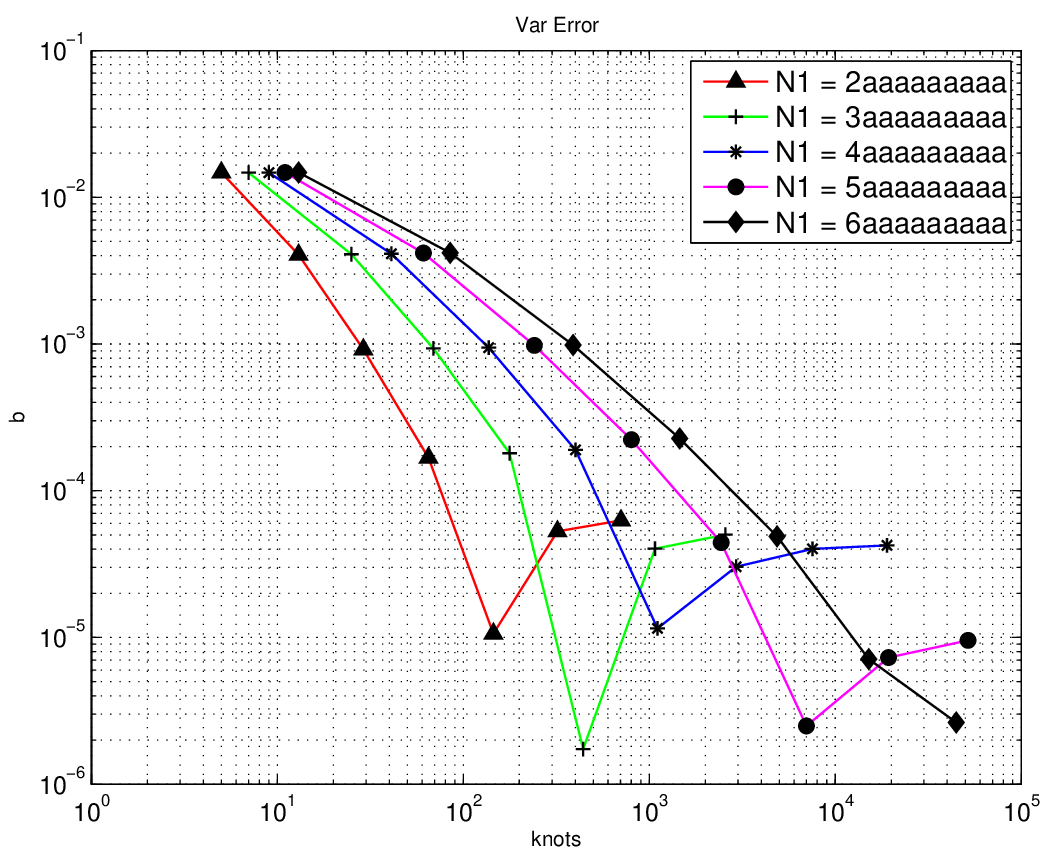} \\
\\
\corr{(a)} & \corr{(b)} \\
\end{tabular}
\end{center}
\caption{Collocation results \corm{(Clenshaw-Curtis multi-linear
    sparse grid)} for $N_s = 2,\dots,6$ with linear decay. (a) Mean
  error with respect to the reference \corm{solution}. Observe that
  the convergence rate decays \corr{subexponentially} until truncation
  saturation is reached.  (b) \corm{Variance error with respect to
    reference solution}. For this case we also observe that the
  convergence rate is faster than polynomial.}
\label{results:fig2}
\end{figure}

\corm{ {\bf Numerical Experiment 1:} For this numerical experiment we compute
  the QoI with a multi-linear Clenshaw-Curtis sparse grid
  \cite{spalg,spdoc}. The parameters of the experiments are as
  follows:}

\begin{enumerate}[(a)]

\item $a(x) = 1$ for all $x \in U$.
\item Stochastic Model. \corr{We split $e(x_1,\omega)$ into large and
  small perturbations as $e(x_{1},\omega): = e_{S}(x_{1},\omega) +
  e_{F}(x_{1},\omega)$, where}
\begin{center}
$e_{S}(x_{1},\omega) := cY_{1}(\omega)  \left(
    \frac{\sqrt{\pi}L}{2} \right)^{1/2} + c \sum_{n = 2}^{N_{s}}
  \sqrt{\mu_{n}}
  \varphi_{n}(x_{1})Y_{n}(\omega); \hspace{1mm}$
  $\corr{e_{F}(x_{1},\omega)} := \corr{c \sum_{n = N_{s}+1}^{N}
  \sqrt{\mu_{n}} \varphi_{n}(x_{1})Y_{n}(\omega)}$.
\end{center}
\item Linear decay $\sqrt{\mu_{n}} :=
  \frac{(\sqrt{\pi}L)^{1/2}}{n}$, $n \in \N$,
\[
\corr{\varphi_{n}(x_{1})} : = \left\{
\begin{array}{cc}
  n^{-1} sin \left( \frac{\lfloor n/2 \rfloor \pi x_{1}}{L_{p}} \right) &
  \mbox{if n is even}\\ n^{-1}cos \left( \frac{\lfloor n/2 \rfloor \pi
      x_{1}}{L_{p}} \right) & \mbox{if n is odd}\\
\end{array}
\right. .
\]
\corr{Thus for $n > 1$ we have that}
\[
\corr{B_n = \left[ \begin{array}{cc}
0 & 0\\
c(x_2 - 0.5) \partial_{x_1} \varphi_n(x_1) & 0 
\end{array}
\right].
}
\]

This implies that $\sup_{x \in U} 
\sigma_{max}(B_{l}(x))$ is bounded by a constant
and we obtain linear decay on the gradient of the deformation.

\item $\{Y_{n}\}_{n = 1}^{N}$ are independent uniform distributed in
  $(-\sqrt{3},\corm{\sqrt{3}).}$

\item \corr{$L = 1/2$}, $L_P = 1$, $c = 0.1533$, $N = 15$.

\item $257 \times 257$ triangular \corm{mesh.}

\item $\mathbb{E}[Q_{h}]$ and \corr{$\var[Q_{h}]$}, are computed with a
  Clenshaw-Curtis isotropic sparse grid (Sparse Grid Toolbox
  V5.1, \cite{spalg,spdoc}).

\item The reference solutions $\var[Q_{h}(u_{ref})]$ and
  $\mathbb{E}[Q_{h}(u_{ref})]$ are computed with \corm{a multi-linear}
  adaptive Sparse Grid ($\approx 30,000$ knots) \cite{Gerstner2003}
  with a $257 \times 257$ mesh for $N = 15$ dimensions.

\item The QoI is normalized by the reference solution $Q(U)$.

\end{enumerate}

In Figure \ref{results:fig2} we show the results of the matlab code
for the \corr{truncated} dimensions $N_s = 2,\dots,6$ and compare the
results with respect to a $N = 15$ dimensional adaptive sparse grid
method collocation with $\approx 30,000$ collocation points
\cite{Gerstner2003}. The computed mean value is 1.0152 and variance is
0.0293 (0.17 std).

In Figure \ref{results:fig2} (a) and (b) the normalized mean and
variance errors are shown for $N_s =2,\dots,6$. For (a) notice the
\corr{subexponential} decay from the sparse grid approximation until
the truncation error and/or finite element error starts to dominate.
\corm{In (b) the variance error decay is actually subexponential
  despite the use of a multi-linear sparse grid, whose performance is
  less than higher order Lagrange polynomial sparse grid shown in
  the second numerical experiment.}


\corm{We now analyze the decay of the truncation error. For $N_s =
  2, \dots, 5$ we compute the mean and variance error as in (g). However,
  for $N_s = 6,\dots,11$ a dimension adaptive sparse grid with 15,000
  to 30,000 sparse grid points is used instead to compute the mean and
  variance. This should be enough to make the error contribution from
  the sparse grid error very small compared to the truncation
  error. The reference solution for the mean and variance is computed
  as in part (h).}

In Figure \ref{results:fig4} we plot the truncation error for (a) the
mean and (b) the variance with respect to the number of dimensions.
We observe that we obtain a convergence rate that appears faster than
the linear decay of the gradient of the stochastic deformation. This
indicates we can further improve the convergence rate of the
truncation estimate.

\begin{figure}
\begin{center}
\begin{tabular}{cc}
\psfrag{Dimension}[c]{\tiny Dimension}
\psfrag{Mean Error}[c]{\tiny Mean Error}
\psfrag{Mean Error vs Dimension}[bl]{\hspace{-7mm} \tiny Mean Truncation Error}
\includegraphics[width=2.4in,height=2.2in]{./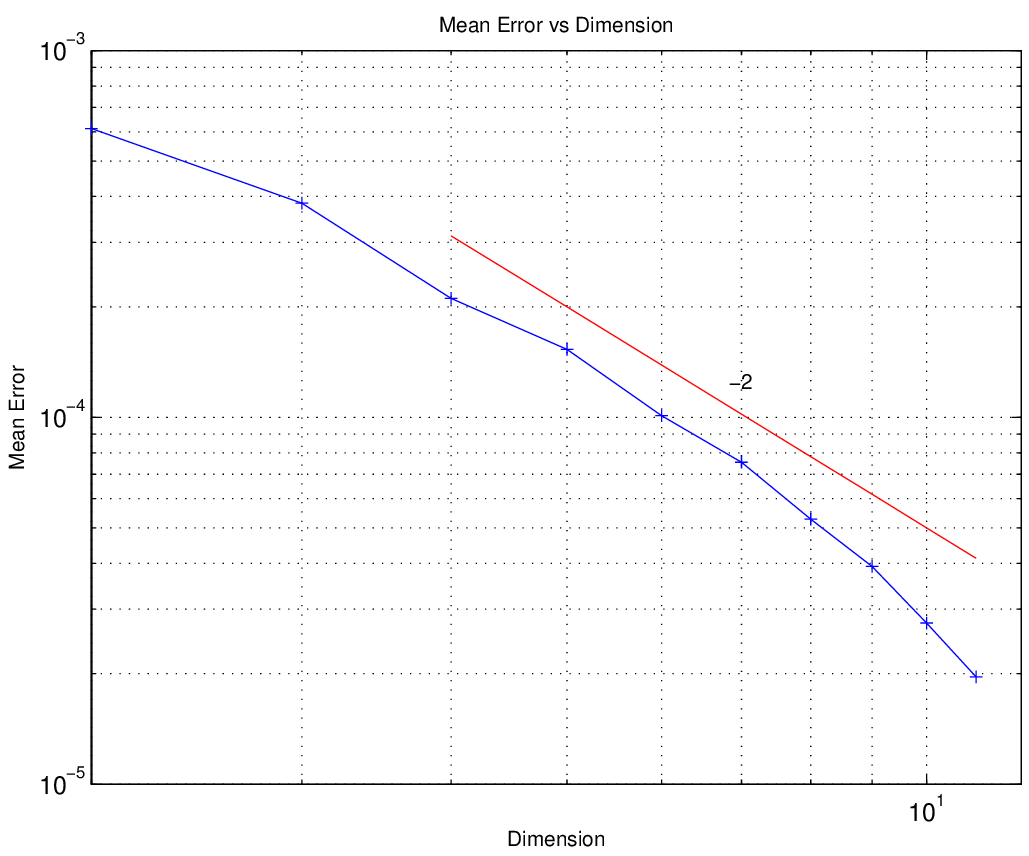} &
\psfrag{Dimension}[c]{\tiny Dimension}
\psfrag{Var Error}[c]{\tiny Var Error}
\psfrag{Var Error vs Dimension}[bl]{\hspace{-9mm} \tiny Variance Truncation Error}
\includegraphics[width=2.4in,height=2.2in]{./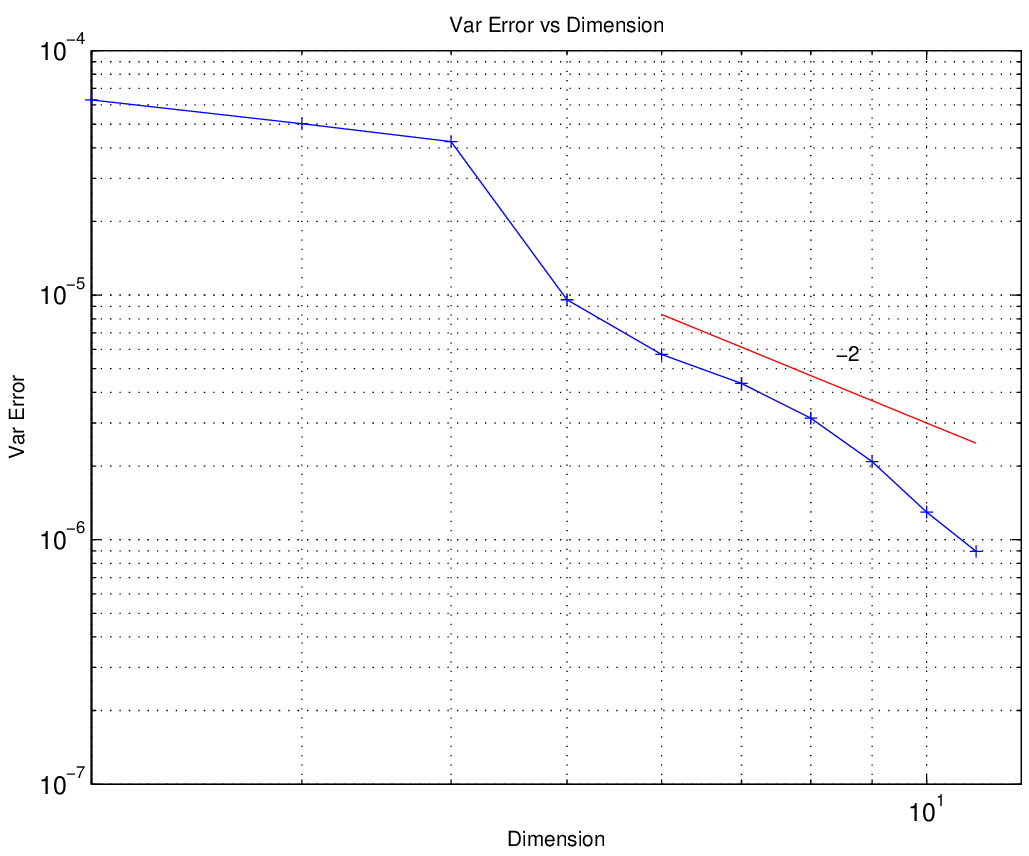} \\
\corr{(a)} & \corr{(b)} \\
\end{tabular}
\end{center}
\caption{Truncation Error with respect to the number of dimensions. (a)
  Mean error. (b) Variance error. In both cases the decay appears
  faster than linear, which is faster than the predicted convergence
  rate.}
\label{results:fig4}
\end{figure}

\bigskip

\corm{ {\bf Numerical Experiment 2:} For this numerical experiment we compute
  the QoI with a higher order Lagrange polynomial sparse grid with
  Chebyshev Gauss-Lobatto abscissas \cite{spalg,spdoc}. For this
  experiment we show the convergence rates for the variance and sparse
  grid error only.  The setup for the numerical experiment is changed
  as follows:}

\begin{enumerate}[(a)]

\item \corm{Since the convergence is much faster than the multi-linear
  sparse grid we increase the mesh size to a $1024 \times 1024$
  triangular mesh.}

\item \corm{$var[Q_h]$ is computed for $N_s = 2,\dots,5$ with the Chebyshev
  Gauss-Lobatto abscissas.}

\item \corm{As a comparison $var[Q_h]$ is also computed for $N_s =
  2,\dots,5$ with the dimension adaptive multi-linear sparse grid with
  15,000 to 20,000 adaptive nodes.}

\end{enumerate}

\begin{figure}[h]
\begin{center}
\psfrag{knots}[][c]{\small knots}
\psfrag{N1 = 2aaaaaaaaa}[][c]{\tiny $N_s = 2$}
\psfrag{N1 = 3aaaaaaaaa}[][c]{\tiny $N_s = 3$}
\psfrag{N1 = 4aaaaaaaaa}[][c]{\tiny $N_s = 4$}
\psfrag{N1 = 5aaaaaaaaa}[][c]{\tiny $N_s = 5$}
\psfrag{N1 = 6aaaaaaaaa}[][c]{\tiny $N_s = 6$}
\psfrag{N1 = 7aaaaaaaaa}[][c]{\tiny $N_s = 8$}
\psfrag{Var Error}[][c]{\small Var Error (Collocation)}
\psfrag{b}[][c]{\small $|Var[Q(u_h)] - Var[\mcS^{m,g}_w[Q(u_h)]]|$} 
\includegraphics[width=3.2in,height=2.8in]{./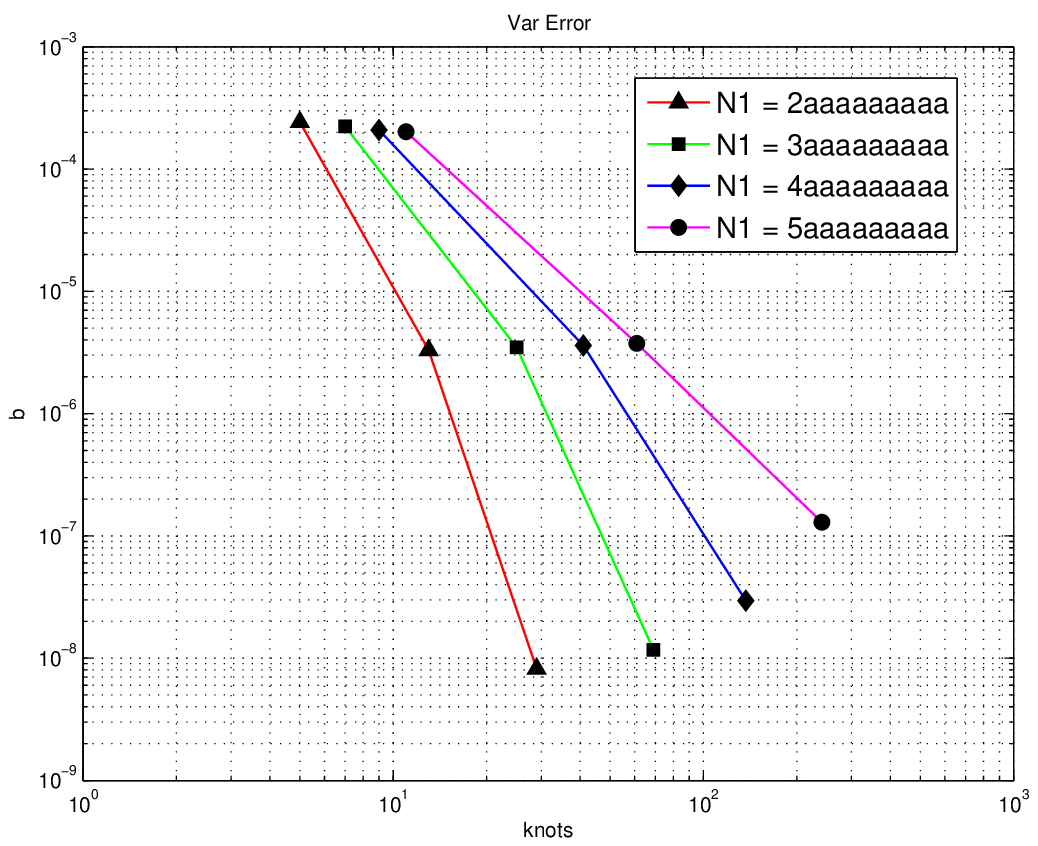} \\
\end{center}
\caption{\corm{Collocation results for higher order Lagrange
    polynomial sparse grid with Chebyshev Gauss-Lobatto  abscissas. Due
    to the higher order Lagrange interpolants the convergence is
    significantly faster compared to the multi-linear sparse grid in
    Figure \ref{results:fig2}}}
\label{results:fig3}
\end{figure}

In Figure \ref{results:fig3} the results for $|Var[Q(u)] -
Var[\mcS^{m,g}_w[Q(u)]]|$ for $N_s = 2, \dots, 5$ are plotted. Due to
the higher order Lagrange interpolants the convergence is
significantly faster compared to the multi-linear sparse grid in
Figure \ref{results:fig2}. Note that we did not do a direct comparison
with $|Var[Q(u_{ref})] - Var[\mcS^{m,g}_w[Q(u_h)]]|$ as the
convergence rate was are so fast that the Truncation error was reached
by the second level of the sparse grid ($w = 2$).

\section{Conclusions}

In this paper we give a rigorous convergence analysis of the
stochastic collocation approach based on isotropic Smolyak grids
  for the approximation of an elliptic PDE defined on a random
  domain. This consists of an analysis of the regularity of the
solution with respect to the parameters describing the domain
perturbation.  Moreover, we derive error estimates both in the
``energy norm'' as well as on functionals of the solution (Quantity of
Interest) for Clenshaw Curtis abscissas that can be easily generalized
to a larger class of sparse grids.

We show that for a linear elliptic partial differential equation with
a random domain the solution can be analytically extended to a
  well defined region $\Theta_{\beta}$ embedded in $\C^{N}$ with
respect to the random variables. This analysis leads to a provable
subexponential convergence rate of the QoI computed with an isotropic
Clenshaw-Curtis sparse grid. We show that the size of this region, and
the rate of convergence, is directly related to the decay of the
  gradient of the stochastic deformation.

As our numerical experiments demonstrate, we are able to solve the
mean and variance of the QoI with moderate deformations of the
domain (leading to a coefficient of variation of the QoI of
  $\approx 0.17$). This is a clear advantage over the perturbation
approaches that are restricted to small deviations. In addition, the
numerical experiments confirm the subexponential rate predicted
from the error estimates.

This approach is well suited for a moderate number of stochastic
  variables but becomes impractical for large problems.  However, we
  can easily extend this approach to anisotropic sparse grids
  \cite{nobile2008b}.

\section*{Acknowledgements} We appreciate the advice from 
Quan Long and Serge Prudhomme. \corm{We are also appreciative of the
  efforts by the reviewers. In particular, one reviewer provided a
  wealth of corrections, comments and advice.}

\end{document}